\def\frk{\mathfrak}               
\def\frm{{\frk m}}
\def\fraM{{\frk M}}
\def\opn#1#2{\def#1{\operatorname{#2}}} 
\opn\chara{char} \opn\length{\ell} \opn\pd{pd} \opn\rk{rk}
\opn\projdim{proj\,dim} \opn\injdim{inj\,dim} \opn\rank{rank}
\opn\depth{depth} \opn\codepth{codepth} \opn\grade{grade}
\opn\height{height} \opn\embdim{emb\,dim} \opn\codim{codim}
\opn\Tr{Tr} \opn\bigrank{big\,rank}
\opn\superheight{superheight}\opn\lcm{lcm}
\opn\trdeg{tr\,deg}%
\opn\reg{reg} \opn\lreg{lreg} \opn\skel{skel} \opn\Gr{Gr}
\opn\dim{dim} \opn\indeg{indeg} \opn\Ass{Ass} \opn\Min{Min}
\opn\div{div} \opn\Div{Div} \opn\cl{cl} \opn\Cl{Cl}
\opn\Spec{Spec} \opn\Supp{Supp} \opn\supp{supp} \opn\Sing{Sing}
\opn\Ass{Ass}
\opn\Ann{Ann} \opn\Rad{Rad} \opn\Soc{Soc}
\opn\Sym{Sym} \opn\Ker{Ker} \opn\Coker{Coker} \opn\Im{Im}
\opn\Hom{Hom} \opn\Tor{Tor} \opn\Ext{Ext} \opn\End{End}
\opn\Aut{Aut} \opn\id{id} \opn\ini{in} \opn\tr{tr}
\def\core{core}
\opn\nat{nat}\opn\it{it}
\opn\pff{proof}
\opn\Pf{proof} \opn\GL{GL} \opn\SL{SL} \opn\mod{mod} \opn\ord{ord}
\opn\diam{diam}
\opn\dist{dist}
\opn\aff{aff} \opn\con{conv} \opn\relint{relint} \opn\st{st}
\opn\lk{lk} \opn\cn{cn} \opn\core{core} \opn\vol{vol}
\opn\link{link} \opn\star{star} \opn\skel{skel}
\opn\gr{gr}
\def\pot#1#2{#1[\kern-0.28ex[#2]\kern-0.28ex]}
\opn\dirlim{\underrightarrow{\lim}}
\opn\inivlim{\underleftarrow{\lim}}
\def\Implies{\ifmmode\Longrightarrow \else
     \unskip${}\Longrightarrow{}$\ignorespaces\fi}
\def\implies{\ifmmode\Rightarrow \else
     \unskip${}\Rightarrow{}$\ignorespaces\fi}
\def\iff{\ifmmode\Longleftrightarrow \else
     \unskip${}\Longleftrightarrow{}$\ignorespaces\fi}
\theoremstyle{plain}
\newtheorem{thm}{Theorem}[section]
\newtheorem{lemma}[thm]{Lemma}
\newtheorem{prop}[thm]{Proposition}
\newtheorem{cor}[thm]{Corollary}
\newtheorem{conj}[thm]{Conjecture}
\newtheorem{quest}[thm]{Question}
\newtheorem*{thm-q}{Theorem}
\newtheorem*{cor-q}{Corollary}
\newtheorem*{quest-q}{Question}
\newtheorem*{quests-q}{Questions}
\theoremstyle{definition}
\newtheorem{defn}[thm]{Definition}
\newtheorem{exam}[thm]{Example}
\newtheorem*{acknowledgement}{Ackowledgement}
\theoremstyle{remark}
\newtheorem{remark}[thm]{Remark}
\let\epsilon\varepsilon
\let\phi=\varphi
\let\kappa=\varkappa
\def\qed{\ifhmode\textqed\fi
   \ifmmode\ifinner\quad\qedsymbol\else\dispqed\fi\fi}
\def\textqed{\unskip\nobreak\penalty50
    \hskip2em\hbox{}\nobreak\hfil\qedsymbol
    \parfillskip=0pt \finalhyphendemerits=0}
\def\dispqed{\rlap{\qquad\qedsymbol}}
\opn\Gin{Gin}
\opn\inii{in} \opn\inim{inm} \opn\rate{rate}
\numberwithin{equation}{section}
\begin{document}
\title{On the second powers of Stanley-Reisner ideals}
\author[Giancarlo Rinaldo]{Giancarlo Rinaldo}
\address[Giancarlo Rinaldo]{Dipartimento di Matematica, 
Universita' di Messina, Salita Sperone, 31. S. Agata, 
Messina 98166, Italy}
\email{rinaldo@dipmat.unime.it}
\author[Naoki Terai]{Naoki Terai}
\address[Naoki Terai]{Department of Mathematics, Faculty of Culture 
and Education, Saga University, Saga 840--8502, Japan}
\email{terai@cc.saga-u.ac.jp}
\author[Ken-ichi Yoshida]{Ken-ichi Yoshida}
\address[Ken-ichi Yoshida]{Graduate School of Mathematics, Nagoya University, 
         Nagoya 464--8602, Japan}
\email{yoshida@math.nagoya-u.ac.jp}
\subjclass[2000]{Primary 13F55, Secondary 13H10}
\date{\today}
\keywords{Stanley-Reisner ideal, Gorenstein ring,   
Cohen--Macaulay ring, Buchsbaum ring, complete intersection ring, 
symbolic power, cross polytope, linkage}

\begin{abstract}
In this paper, we study several properties of 
the second power $I_{\Delta}^2$ of a Stanley-Reisner ideal $I_{\Delta}$ of any dimension.  
As the main result, we prove that $S/I_{\Delta}$ is Gorenstein 
whenever $S/I_{\Delta}^2$ is Cohen-Macaulay over any field $K$.  
Moreover, we give a criterion for the second symbolic power of $I_{\Delta}$ 
to satisfy $(S_2)$
and to coincide with the ordinary power, respectively.  
Finally, we provide new examples of Stanley-Reisner ideals whose second powers 
are Cohen-Macaulay.      
\end{abstract}

\maketitle
\setcounter{section}{-1}
\section{Introduction}

It is proved in \cite{TeTr} that a simplicial complex $\Delta$ is a complete intersection
if the third power $I_{\Delta}^{3}$ of its Stanley-Reisner ideal
is Cohen-Macaulay,
using a result in  \cite{MiT2, Var}.
On the other hand, there is a simplicial complex $\Delta$ which is not a complete intersection
such that $I_{\Delta}^{2}$ is Cohen-Macaulay.
The simplicial complex associated with a pentagon is  such an example. 
Among one-dimensional simplicial complexes, the above example
 is a unique one, as shown in \cite{MiT1}. 
As for the two-dimensional case, such simplicial complexes are  
classified in \cite{TrTu}.
In \cite{MiT2} a characterization of Cohen-Macaulayness of 
the second symbolic power $I_{\Delta}^{(2)}$ is given. 
\par   
A main motivation of this paper is to study 
the Cohen-Macaulayness of the second ordinary powers 
of Stanley-Reisner ideals of any dimension. 
We consider the following two questions:
\begin{enumerate}
\item What constraints does Cohen-Macaulayness of $I_{\Delta}^{2}$ impose upon a simplicial complex $\Delta$?
\item Do there exist \textit{many} simplicial complexes $\Delta$ such that $I_{\Delta}^{2}$ are Cohen-Macaulay?
\end{enumerate}
\par
As for the second question 
we give two families of examples. One is a simplicial join of pentagons;
the other is a stellar subdivision of a complete intersection  complex.

\par
For the first question we treat more general properties and give necessary conditions 
for Cohen-Macaulayness of the square, as a result.
In each section we pick up a different condition; In Sections 2, 3, and 4  we consider  
quasi-Buchsbaum property, Serre's condition $(S_2)$, and unmixedness of a (symbolic) square, respectively.
Summarizing results in these sections, we have the following theorem:

\begin{thm} \label{IntroMain}
Let $\Delta$ be a simplicial complex on $[n]=\{1,2,\ldots,n\}$.
Let $S=K[x_1,\ldots,x_n]$ be a polynomial ring.  
Suppose that $S/I_{\Delta}^2$ is Cohen-Macaulay over any field $K$.   
Then the following conditions are satisfied:
\begin{enumerate}
\item $\Delta$ is Gorenstein.   
\item $\diam ((\link_{\Delta} F)^{(1)}) \le 2$ for any face $F \in \Delta$ with 
$\dim \link_{\Delta} F \ge 1$. 
\item For $F_1,F_2, F_3 \in 2^{[n]} \setminus \Delta $ there exist $G_1, G_2 \in 2^{[n]} \setminus \Delta $ 
such that $G_1 \cup G_2 \subset F_1 \cup F_2 \cup  F_3 $ and $G_1 \cap G_2 \subset F_1 \cap F_2 \cap  F_3 $. 
\end{enumerate}
\end{thm}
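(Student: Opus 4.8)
The plan is to prove each of the three conditions by localizing the Cohen--Macaulay hypothesis on $S/I_\Delta^2$ at suitable prime ideals and translating algebraic statements about powers of $I_\Delta$ into combinatorial statements about $\Delta$. The key general mechanism is that if $S/I_\Delta^2$ is Cohen--Macaulay, then so is $(S/I_\Delta^2)_\frp$ for every prime $\frp$, and in particular for a face $F\in\Delta$ the localization of $S/I_\Delta^2$ at the prime generated by the variables outside $F$ is (up to a polynomial extension) the analogue for $\link_\Delta F$. So the three conditions should reduce, via such localizations, to statements about links, and one reduces further to low-dimensional links where the combinatorics of the square can be analyzed by hand.

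For (1): the idea is that Cohen--Macaulayness of $S/I_\Delta^2$ forces $S/I_\Delta$ to be Cohen--Macaulay and in fact Gorenstein. First, $S/I_\Delta$ is a quotient of $S/I_\Delta^2$ and, using the short exact sequence $0\to I_\Delta/I_\Delta^2\to S/I_\Delta^2\to S/I_\Delta\to 0$, one shows that $\depth S/I_\Delta\ge\depth S/I_\Delta^2 = \dim S/I_\Delta^2 = \dim S/I_\Delta$, so $S/I_\Delta$ is Cohen--Macaulay; then $\Delta$ is Cohen--Macaulay over every field. To upgrade to Gorenstein, reduce to the zero-dimensional (Artinian) case by localizing at facets: it suffices to show every link of dimension $0$ consists of exactly two points. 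If some vertex link $\link_\Delta F$ had three or more isolated points $a,b,c$, then in the corresponding localization $I_\Delta^2$ would behave like the square of the ideal of three (or more) coordinate points in affine space, and one checks directly that the square of the ideal of $\ge 3$ general coordinate points is not Cohen--Macaulay (the Hilbert function / a Koszul computation shows positive-dimensional socle in the wrong degree, or one invokes the linkage machinery hinted at by the keyword ``linkage''). Hence each $0$-dimensional link is an edge, which by the standard characterization (Reisner plus the ``two points'' condition on links, i.e.\ the cross-polytope criterion) means $\Delta$ is Gorenstein.

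For (2): given a face $F$ with $\dim\link_\Delta F\ge 1$, localize to reduce to the case $F=\emptyset$ and $\Delta=\link_\Delta F$ of dimension $\ge 1$, where $S/I_\Delta^2$ is still Cohen--Macaulay. I want to show the diameter of the $1$-skeleton $\Delta^{(1)}$ is at most $2$. Suppose not: pick vertices $u,v$ at distance $\ge 3$. Then $\{u,v\}\notin\Delta$ and moreover $u,v$ have no common neighbor, so every minimal generator of $I_\Delta$ of degree $2$ supported on $\{u,v,\text{a third vertex}\}$ is absent in a strong way; this should produce an obstruction to $(S_2)$ (hence to Cohen--Macaulayness) for $S/I_\Delta^2$, because $I_\Delta^2$ will fail to be unmixed — a minimal prime of the ``wrong'' height appears, coming from the fact that $x_ux_v$ lies in $I_\Delta$ but not in the symbolic square. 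Concretely, I would compare $I_\Delta^{(2)}$ and $I_\Delta^2$: distance $\ge 3$ between $u$ and $v$ forces $x_ux_v\in I_\Delta^{(2)}\setminus I_\Delta^2$ for degree reasons, so the inclusion $I_\Delta^2\subsetneq I_\Delta^{(2)}$ is strict in a way detected at a minimal prime, contradicting that a Cohen--Macaulay $S/I_\Delta^2$ is unmixed and satisfies $(S_2)$. The diameter bound then follows.

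For (3): this is the unmixedness statement for $I_\Delta^2$ rephrased combinatorially. If $S/I_\Delta^2$ is Cohen--Macaulay it is unmixed, and the associated primes of $I_\Delta^2$ are known (by a general description of $\Ass(I_\Delta^2)$ in terms of $\Delta$) to be among the minimal primes $P_F=(x_i : i\notin F)$ for facets $F$, together with possibly an embedded prime forced by triples of non-faces. The condition displayed in (3) is exactly the combinatorial translation of ``no such embedded prime occurs'': $F_1,F_2,F_3\notin\Delta$ with no $G_1,G_2\notin\Delta$ sandwiched as stated would produce a monomial in the symbolic square but not in the ordinary square whose colon ideal is a prime of too-large height, violating unmixedness. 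So I would (a) recall the formula for $\Ass(S/I_\Delta^2)$, (b) identify the potential embedded prime with the failure of the $G_1,G_2$ condition, and (c) conclude that unmixedness — a consequence of Cohen--Macaulayness — is equivalent to (3).

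The main obstacle will be (3): pinning down exactly which primes can be embedded in $I_\Delta^2$ and matching the algebra to the clean three-non-faces/two-non-faces combinatorial condition requires a careful bookkeeping of generators of $I_\Delta^2$ versus $I_\Delta^{(2)}$ at each candidate prime. The Gorenstein conclusion (1), while conceptually the headline result, should follow fairly directly once the ``$\ge 3$ isolated points in a link kills Cohen--Macaulayness of the square'' computation is in hand, and (2) is an intermediate-difficulty diameter argument via the strict inclusion $I_\Delta^2\subsetneq I_\Delta^{(2)}$.
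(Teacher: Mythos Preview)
Your overall localization strategy is sound, and your treatment of (3) is essentially correct: the condition is indeed the combinatorial translation of $I_\Delta^{(2)}=I_\Delta^2$, which follows from unmixedness of the Cohen--Macaulay ring $S/I_\Delta^2$. But parts (1) and (2) each contain a genuine gap.

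For (1), your reduction ``it suffices to show every link of dimension $0$ consists of exactly two points'' is false. A triangulation of the real projective plane (Reisner's complex) is Cohen--Macaulay over $\mathbb{Q}$, every edge link has exactly two points, yet it is not Gorenstein over any field since $\widetilde\chi(\Delta)=0$. So the ``two points'' condition on $0$-dimensional links does not by itself force Gorensteinness; you are missing control of the top homology of $\Delta$ and of higher-dimensional links. The paper's argument (Lemma~\ref{Vasconcelos}) proceeds differently: after localizing it analyzes the $1$-dimensional links as well, showing each is a cycle or a path of length $\le 3$ (using the diameter bound from \cite{MiT1}), hence Gorenstein. This makes $\Delta$ a homology manifold. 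It then works first over $K=\ZZ/2\ZZ$, where Stanley's criterion applies without an orientability obstruction, to conclude $\widetilde\chi(\Delta)=(-1)^{d-1}$; since the Euler characteristic is field-independent, Gorensteinness over every $K$ follows. Your sketch has neither the $1$-dimensional link step nor the characteristic-$2$ trick, and without them the argument does not close.

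For (2), your concrete mechanism is wrong: the monomial $x_ux_v$ is \emph{never} in $I_\Delta^{(2)}$ when $u$ and $v$ are vertices, because any facet $F$ containing $u$ has $x_ux_v\notin P_F^2$. So ``distance $\ge 3$ forces $x_ux_v\in I_\Delta^{(2)}\setminus I_\Delta^2$'' fails, and the rest of your argument (comparing $I^2$ and $I^{(2)}$ at a minimal prime) does not get off the ground. The paper instead passes to the symbolic square and uses Takayama's formula for local cohomology (Theorem~\ref{Depth}): $\depth S/I_\Delta^{(2)}\ge 2$ holds iff the auxiliary complexes $\Delta_{\bf a}$ are connected for all relevant ${\bf a}$, and for ${\bf a}={\bf e}_r+{\bf e}_s$ connectedness of $\Delta_{\bf a}$ is exactly $\dist(r,s)\le 2$ in $\Delta^{(1)}$. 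Since Cohen--Macaulayness of $S/I_\Delta^2$ implies that of $S/I_\Delta^{(2)}$, the diameter bound follows by localization (Corollary~\ref{S2thm}). Your $(S_2)$/unmixedness intuition is pointing in the right direction, but the actual obstruction lives in $H^1_\frm$ of the symbolic square, not in a degree-$2$ monomial.
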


As shown in Corollary \ref{S2thm} the condition (2) 
is equivalent to Serre's condition $(S_2)$ of $S/I_{\Delta}^{(2)}$.
And  as shown in Theorem \ref{SpTriangle} the condition (3) 
is equivalent to the condition $I_{\Delta}^{2}=I_{\Delta}^{(2)}$.

We may ask the converse:

\begin{quest}
{\rm
Do the conditions (1), (2) and (3) imply 
that $S/I_{\Delta}^2$ is Cohen-Macaulay?
}
\end{quest}

It is known that
Cohen-Macaulayness of $I_{\Delta}^{2}$ is equivalent to Cohen-Macaulayness of $I_{\Delta}^{(2)}$ and
$I_{\Delta}^{2}=I_{\Delta}^{(2)}$.
Hence the above question will be affirmative if so is  the following one, which is interesting in its own right:

\begin{quest} \label{Q12}
{\rm
Do the conditions (1) and  (2) imply 
that $S/I_{\Delta}^{(2)}$ is Cohen-Macaulay?
}
\end{quest}

Stronger versions of the first question are as follows:

\begin{quest} \label{Q13}
{\rm
Do the conditions (1) and (3) imply 
that $S/I_{\Delta}^2$ is Cohen-Macaulay?
}
\end{quest}

\begin{quest} \label{Q23}
{\rm
Do the conditions (2) and (3) imply 
that $S/I_{\Delta}^2$ is Cohen-Macaulay?
}
\end{quest}
\par
By \cite{MiT1},
the above questions are true if simplicial complexes are one-dimensional.
\par
For the case that edge ideals $I(G)$ of  graphs $G$ without isolated vertices 
are unmixed with the condition $2\height I(G)=n$,
the above questions are also true.
If $I(G)$ is Gorenstein, then it is a complete intersection by \cite{CRT}.
Hence $I(G)^2$ is Cohen-Macaulay and Questions \ref{Q12} and  \ref{Q13} are affirmative.
On the other hand, it is proved in \cite{CRTY} that 
there is some face $F$ in the simplicial complex $\Delta _2$ corresponding to the polarization of the second
symbolic power $I(G)^{(2)}$ such that $\link_{\Delta _2}F$ is not strongly connected,
if $I(G)$ is not a complete intersection.
This implies that the polarization of $I(G)^{(2)}$ does not satisfy  Serre's condition $(S_2)$.
By \cite{MuT}, $I(G)^{(2)}$ does not satisfy  Serre's condition $(S_2)$, either.
It means that $I(G)$ is a complete intersection if $I(G)^{(2)}$ satisfies  Serre's condition $(S_2)$.
Hence Question \ref{Q23} is also affirmative.

\par \vspace{2mm}
Now let us summarize the organization of the paper. 
In Section 1, we fix the terminology which we need later.

\par \vspace{2mm}
In Section 2 we consider quasi-Buchsbaum property, which is weaker than Cohen-Macaulay property.
And we prove the following theorem as a main result in this section:

\par \vspace{2mm} \par \noindent 
{\bf Theorem \ref{Buchsbaum}}
Let $\Delta$ be a simplicial complex on $[n]$ of dimension $d-1 \ge 2$.  
Let $S=K[x_1,\ldots,x_n]$ be a polynomial ring.
Suppose that $S/I_{\Delta}^2$ is quasi-Buchsbaum over any field $K$.   
Then $S/I_{\Delta}$ is Gorenstein.

\par \vspace{2mm}
Since Cohen-Macaulay property implies Serre's condition $(S_2)$, 
in Section 3 we give a criterion for $I_{\Delta}^{(2)}$ to satisfy $(S_2)$, which 
is a generalization of \cite[Theorem 2.3]{MiT2}; see Theorem \ref{Depth} and
Corollary \ref{S2thm}. 
As an application, we show that for Reisner's complex  
(a triangulation of the real projective plane) $\Delta$,
$S/I_{\Delta}^{(2)}$ satisfies $(S_2)$ but is \textit{not}  Cohen-Macaulay. 

\par \vspace{2mm}
In Section 4  we consider the problem 
when $I^{(2)} = I^2$ holds for a Stanley-Reisner ideal $I$, which is also a necessary condition
for Cohen-Macaulayness of $I^{2}$.  
It is also discussed in \cite{TrTu}.
We give a criterion for the second symbolic power 
to be equal to the ordinary power for Stanley-Reisner ideals 
in terms of the hypergraph of the generators; see Theorem \ref{SpTriangle}.  
This generalizes a similar criterion for edge ideals. 
As an application, we show that the second powers of the edge ideals 
of finitely many disjoint union of pentagons are Cohen-Macaulay
as in the second symbolic power case in \cite{MiT2}.

\par \vspace{2mm}
In Section 5, we give examples of the complexes whose second powers of 
the Stanley-Reisner ideals are Cohen-Macaulay. 
More precisely, we prove the following theorem, 
which  is a generalization of a two-dimensional complex  
in \cite[Theorem 3.7 (iii)]{TrTu}.

\par \vspace{2mm} \par \noindent 
{\bf Theorem \ref{Subdiv}.}
Let $\Delta$  be a stellar subdivision of a non-acyclic 
complete intersection complex $\Gamma$. 
Then $S/I_{\Delta}^2$ is Cohen--Macaulay. 

\section{Preliminaries}
In this section we recall several definitions and properties that we
will use later. 
See also \cite{BH, Ma, St, SV}. 

\par \vspace{2mm}
\subsection{Stanley--Reisner ideals}
Let $V=[n]$. 
A nonempty subset $\Delta$ of the power set $2^V$ 
is called a \textit{simplicial complex} on $V$ if 
(i) $F \in \Delta$, $F' \subseteq F \Longrightarrow F' \in \Delta$
and (ii) $\{v\} \in \Delta$ for all $v \in V$.
An element $F \in \Delta$ is called a \textit{face} of $\Delta$.  
The dimension of $F$ is defined by $\dim F = \sharp(F)-1$, where 
$\sharp(F)$ denotes the cardinality of a set $F$.   
The dimension of $\Delta$, denoted by $\dim \Delta$, 
is the maximum of the dimensions of all faces.  
A maximal face of $\Delta$ is called a \textit{facet} of $\Delta$, and  
let $\mathcal{F}(\Delta)$ denote the set of all facets of $\Delta$. 

\par
In the following, let $\Delta$ be a simplicial complex with 
$\dim \Delta =d-1$, and let $K$ be a field.   
Then $\Delta$ is called \textit{pure}  
if all the facets of $\Delta$ have the same cardinality $d$. 
Put $f_i(\Delta)=\sharp\{F \in \Delta\,:\, \dim F =i\}$ 
for each $i=0,1,\ldots,d-1$. 
For each $i$, $\widetilde{H}_i(\Delta; K)$ (resp. $\widetilde{H}^i(\Delta;K)$) 
denotes the $i$th reduced simplicial homology (resp. cohomology) of $\Delta$ 
with values in $K$. 
We omit the symbol $K$ unless otherwise specified.  
The \textit{reduced Euler characteristic} of $\Delta$ is defined by 
\[
\widetilde{\chi}(\Delta) = -1 + \sum_{i=0}^{d-1} f_i(\Delta) = 
\sum_{i=-1}^{d-1} (-1)^i \dim_K \widetilde{H}_i (\Delta).   
\] 
\par
For each face $F \in \Delta$, 
the $\textit{star}$ and the
\textit{link} of $F$ are defined by
\[
\star_{\Delta} F = \{H \in \Delta \;:\,H \cup F \in \Delta\}, \quad
\link_{\Delta} F = \{H \in \star_{\Delta} F \;:\, H \cap F = \emptyset\}.
\]
Note that these are also simplicial complexes. 
Moreover, we note that for any subset $W \subseteq V$, 
$\Delta_W = \{F \in \Delta \,:\, F \subseteq W \}$ 
is also a subcomplex of $\Delta$. 
For any integer $k$ with $0 \le k \le d-1$, the $k$-th \textit{skeleton} of 
$\Delta$ is defined by 
$\Delta^{(k)} = \{F \in \Delta \,;\, \dim F \le k \}$.
Then $\Delta^{(k)}$ is a subcomplex of $\Delta$ with $\dim \Delta^{(k)} = k$. 
\par 
The \textit{Stanley--Reisner ideal} of $\Delta$, denoted by $I_{\Delta}$,   
is the squarefree monomial ideal of $S=K[x_1,\ldots,x_n]$ generated by 
\[
 \{x_{i_1} x_{i_2} \cdots x_{i_p} \,:\, 1 \le i_1 < \cdots < i_p \le n,\; 
\{x_{i_1},\ldots,x_{i_p}\} \notin \Delta \},  
\]
and $K[\Delta]= K[x_1,\ldots,x_n]/I_{\Delta}$ is called 
the \textit{Stanley--Reisner ring} of $\Delta$. 
Note that the Krull dimension of $K[\Delta]$ is equal to $d$. 
For any subset $\sigma$ of $V$, $x_{\sigma}$ denotes 
the squarefree monomial in $K[x_1,\ldots,x_n]$ with support $\sigma$.  
\par
For a simplicial complex $\Delta$ on $V$, we put  
$\core V = \{x \in V\,:\, \star\{x\} \ne V \}$. 
Moreover, we define the \textit{core} of $\Delta$ by 
$\core \Delta = \Delta_{\core V}$. 
\par
For a given face $F$ of $\Delta$ with $\dim F \ge 1$ and 
a new vertex $v$, the \textit{stellar subdivision} of $\Delta$ 
on $F$ is the simplicial complex $\Delta_{F}$ on the vertex 
set $V \cup \{v\}$ defined by 
\[
\Delta_{F} = \big(\Delta \setminus 
\{H \,|\, F \subseteq H \in \Delta\} \big)
\cup 
\{H \cup \{v\}\,|\, 
H \in \Delta,\,F \not \subseteq H,\, 
F \cup H \in \Delta \}.
\]
Notice that $\Delta_{F}$ is homeomorphic to $\Delta$.

\par \vspace{2mm}  
\begin{picture}(400,70)
  \put(40,28){$\Delta=$}
  \thicklines
  \put(77,45){\circle*{5}}  
  \put(77,10){\circle*{5}}  
  \put(112,45){\circle*{5}}  
  \put(112,10){\circle*{5}}  
  \put(64,45){{\tiny $x_1$}}  
  \put(64,10){{\tiny $y_2$}}  
  \put(116,45){{\tiny $x_2$}}  
  \put(116,10){{\tiny $y_1$}} 
  \put(77,42){\line(0,-1){29}}  
  \put(112,42){\line(0,-1){29}} 
  \put(77,45){\line(1,0){35}} 
  \put(77,10){\line(1,0){35}} 
  \put(90,50){$F$}
  \put(170,25){\vector(1,0){40}}
  \put(160,35){{\tiny stellar subdivision}}
  \put(277,45){\circle*{5}}  
  \put(277,10){\circle*{5}}  
  \put(312,45){\circle*{5}}  
  \put(312,10){\circle*{5}}  
  \put(295,62){\circle*{5}}
  \put(264,45){{\tiny $x_1$}}  
  \put(264,10){{\tiny $y_2$}}  
  \put(316,45){{\tiny $x_2$}}  
  \put(316,10){{\tiny $y_1$}} 
  \put(299,64){{\tiny $v$}}
  \put(277,42){\line(0,-1){29}}  
  \put(312,42){\line(0,-1){29}} 
  \put(277,10){\line(1,0){35}} 
  \put(277,45){\line(1,1){18}} 
  \put(312,45){\line(-1,1){18}}
\end{picture}

\par \vspace{2mm}
Let $G$ be a graph, which means a  finite graph without loops and 
multiple edges.  
Let $V(G)$ (resp. $E(G)$) denote the set of vertices (resp. edges) of $G$. 
Put $V(G) =[n]$. 
Then the \textit{edge ideal} of $G$, denoted by $I(G)$, 
is a squarefree monomial ideal 
of $S=K[x_1,\ldots,x_n]$ defined by 
\[
 I(G) = (x_ix_j \,:\, \{i, j\} \in E(G)).
\]

\par 
For an arbitrary graph $G$, the simplicial complex $\Delta(G)$ 
with $I(G) = I_{\Delta(G)}$ is called 
the \textit{complementary simplicial complex} of $G$. 

\par \vspace{2mm}
Let $G$ be a connected graph, and let $p$,$q$ be two vertices of $G$.  
The \textit{distance} between $p$ and $q$, denoted by $\dist(p,q)$, 
is the minimal length of paths from $p$ to $q$. 
The \textit{diameter}, denoted by $\diam G$, is the maximal distance 
between two vertices of $G$. 
We set $\diam G = \infty$ if $G$ is a disconnected graph. 
\par \vspace{2mm}
Let $\Delta$ be a simplicial complex on $V$ of dimension $1$. 
Then $\Delta$ can be regarded as a graph on $V$ whose 
edge set is defined by $E(\Delta) = \{F \in \Delta \,:\, \dim F=1 \}$.

\par \vspace{2mm}
\subsection{Symbolic powers}
Let $I$ be a radical ideal of $S$. 
Let $\Min_S(S/I) = \{P_1,\ldots, P_r\}$ be the set of the 
minimal prime ideals of $I$, and 
put $W = S \setminus \bigcup_{i=1}^r P_i$. 
Given an integer $\ell \ge 1$, 
the \textit{$\ell$th symbolic power} of $I$ 
is defined to be the ideal
\[
 I^{(\ell)}= I^{\ell}S_W \cap S = \bigcap_{i=1}^r P_i^{\ell}S_{P_i} \cap S.
\]
In particular, if $I=I_{\Delta}$ is the Stanley-Reisner ideal of $\Delta$, 
putting $P_F=(x \in [n] \setminus F)$ for each facet $F$, then we have 
\[
 I_{\Delta} = \bigcap_{F \in \mathcal{F}(\Delta)} P_F
\]
and hence 
\[
 I_{\Delta}^{(\ell)} = \bigcap_{F \in \mathcal{F}(\Delta)} P_F^{\ell}.   
\]
\par
In general, $I^{\ell} \subseteq I^{(\ell)}$ holds, but the other inclusion 
does not necessarily hold. 
For instance, if $I=(x_1x_2,x_2x_3,x_3x_1)$, then 
\[
 I^{(2)} = (x_1,x_2)^2 \cap (x_2,x_3)^2 \cap (x_1,x_3)^2 
= I^2 + (x_1x_2x_3) \ne I^2. 
\]
\par
Moreover, if $I$ is a unmixed squarefree monomial ideal, 
then $I^{(\ell)}$ is unmixed. 
Thus if $S/I^{\ell}$ is Cohen-Macaulay (or Buchsbaum), 
then so is $S/I^{(\ell)}$. 

\par \vspace{2mm}
\subsection{Serre's condition}
Let $S=K[x_1,\ldots,x_n]$ and $\frm = (x_1,\ldots,x_n)S$. 
Let $I$ be a homogeneous ideal of $S$. 
For a positive integer $k$,   
$S/I$ satisfies \textit{Serre's condition} $(S_k)$ 
if $\depth (S/I)_P \ge \min\{\dim (S/I)_P,\, k \}$ for every $P \in \Spec S/I$.

\par
A simplicial complex $\Delta$ is called 
\textit{Cohen--Macaulay} (resp. Gorenstein, (FLC) etc.) 
if so is $K[\Delta]$ over any field $K$.  
Moreover, 
if $\Delta$ is (FLC), then $\Delta$ is pure and 
$\link_{\Delta}(F)$ is Cohen-Macaulay for every nonempty face $F \in \Delta$. 
\par 
A homogeneous $K$-algebra $S/I$ is called \textit{quasi-Buchsbaum}
if $\frm H_{\frm}^i(S/I) =0$ for each $i=0,1,\ldots,\dim S/I-1$. 
It is known that any quasi-Buchsbaum ring has (FLC) and the converse 
is also true for Stanley-Reisner rings.

\par \vspace{2mm}
\subsection{Associated simplicial complex of monomial ideals}

Let $S=K[x_1,\ldots,x_n]$ be a polynomial ring 
with natural $\mathbb{Z}^n$-graded structure. 
Let $\frm=(x_1,\ldots,x_n)S$ be the unique homogeneous maximal 
ideal of $S$.  
Let $I$ be a monomial ideal of $S$, and 
let $G(I)$ denote the minimal monomial generators of $I$.  
For each $i$, we put $\rho_i= \max\{b_i \,:\, x^{\bf b} \in G(I)\}$,
where ${\bf b} = (b_1,\ldots,b_n) \in \mathbb{N}^n$ and $x^{\bf b} = x_1^{b_1}\cdots x_n^{b_n}$. 
Then $S/I$ can be considered as a $\mathbb{Z}^n$-graded ring. 
\par
Let ${\bf a} \in \mathbb{Z}^n$ be a vector. 
For any $\mathbb{Z}^n$-graded $S$-module $M$,  
$M_{\bf a}$ denotes the graded ${\bf a}$-component of $M$. 
We put $G_{\bf a} = \{i \in [n]\,:\, a_i < 0 \}$. 
As $\sqrt{I}$ is a squarefree monomial ideal, 
there exists a simplicial complex $\Delta$ 
such that $I_{\Delta} = \sqrt{I}$. Then we define 
$\Delta(I) = \Delta$. 
Under this notation, a subcomplex $\Delta_{\bf a}(I)$ is defined by 
\[
\Delta_{\bf a}(I) = \left\{F \in \Delta(I) \,:
\begin{array}{l}
\bullet \;\text{$F \cap G_{\bf a} = \emptyset$.}\\
\bullet \;\text{For every $x^{\bf b} \in G(I)$, 
there exists an $i \in [n] \setminus (F \cup G_{\bf a})$} \\
\phantom{\bullet}\; \text{such that $b_i > a_i$}. 
\end{array}
\right\}.
\]

\par
This complex plays a key role in Takayama's formula for local cohomology modules 
of monomial ideals, which is known 
as Hochster's formula in the case of squarefree monomial ideals.

\par \vspace{2mm}
Let $I=I_{\Delta}$ be a squarefree monomial ideal of $S$. 
Then $I^{(\ell)}$ is a monomial ideal whose radical is equal to $I$. 
The following lemma enables us to compute $\Delta_{\bf a}(I^{(\ell)})$ easily. 

\begin{lemma}[Minh and Trung \cite{MiT1}] \label{MTrep}
Let $I$ be a squarefree monomial ideal in $S$. 
Let $\ell \ge 1$ be an integer and ${\bf a} \in \mathbb{N}^n$. 
Then we have 
\[
 \Delta_{\bf a}(I^{(\ell)})
 = \langle F \in \mathcal{F}(I) \,:\, 
\sum_{i \notin F} a_i \le \ell-1 \rangle. 
\]
\end{lemma}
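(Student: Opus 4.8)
The plan is to compute the subcomplex $\Delta_{\bf a}(I^{(\ell)})$ directly from the definition given in Section~1.4, using the explicit primary decomposition $I_{\Delta}^{(\ell)}=\bigcap_{F\in\mathcal{F}(\Delta)}P_F^{\ell}$ to identify a convenient generating set of the monomial ideal $I^{(\ell)}$. First I would recall that for a squarefree monomial ideal $I=I_{\Delta}$ the radical of $I^{(\ell)}$ is again $I_{\Delta}$, so $\Delta(I^{(\ell)})=\Delta$; thus the faces of $\Delta_{\bf a}(I^{(\ell)})$ are among the faces of $\Delta$, and since $\mathbf a\in\mathbb N^n$ we have $G_{\bf a}=\emptyset$, so the first bullet in the definition of $\Delta_{\bf a}$ is vacuous. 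It then suffices to understand, for a face $F\in\Delta$, the condition ``for every $x^{\bf b}\in G(I^{(\ell)})$ there exists $i\in[n]\setminus F$ with $b_i>a_i$.''

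Next I would determine the generators $G(I^{(\ell)})$. Since $I^{(\ell)}=\bigcap_{F}P_F^{\ell}$ and $P_F=(x_i:i\notin F)$, a monomial $x^{\bf b}$ lies in $I^{(\ell)}$ precisely when, for every facet $F$, one has $\sum_{i\notin F}b_i\ge\ell$. The minimal such monomials are, up to the combinatorics of the facets, the monomials $\prod_{i\in\sigma}x_i^{c_i}$ whose support $\sigma$ is a minimal non-face-type set hitting the complement of every facet with total multiplicity $\ell$; in particular the ``pure power'' monomials $x_j^{\ell}$ for $j$ not contained in any facet do \emph{not} appear (every vertex lies in some facet by the simplicial complex axiom), but more importantly one can always reduce to checking the defining inequality on a well-chosen subfamily. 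The key computational step is to show: a face $F\in\Delta$ satisfies $F\in\Delta_{\bf a}(I^{(\ell)})$ if and only if $F$ is contained in some facet $F'$ with $\sum_{i\notin F'}a_i\le\ell-1$. For the ``if'' direction, given such $F'$ and any generator $x^{\bf b}\in G(I^{(\ell)})$, the inequality $\sum_{i\notin F'}b_i\ge\ell>\sum_{i\notin F'}a_i$ forces $b_i>a_i$ for some $i\notin F'\subseteq[n]\setminus F$, which is what the second bullet demands. For the ``only if'' direction, suppose $F$ lies in no such facet, i.e.\ $\sum_{i\notin F'}a_i\ge\ell$ for every facet $F'\supseteq F$ (and if $F$ is in no facet at all there is nothing containing it in $\Delta$, contradiction); one then builds from $\mathbf a$ a generator of $I^{(\ell)}$ supported off $F$ that violates the bullet — concretely, truncate $\mathbf a$ to the coordinates outside $F$ and scale down each relevant facet's excess to exactly $\ell$, producing $x^{\bf b}\in I^{(\ell)}$ with $b_i\le a_i$ for all $i\notin F$, hence $F\notin\Delta_{\bf a}(I^{(\ell)})$. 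This establishes exactly the stated equality $\Delta_{\bf a}(I^{(\ell)})=\langle F\in\mathcal{F}(I):\sum_{i\notin F}a_i\le\ell-1\rangle$.

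I expect the main obstacle to be the ``only if'' direction: one must produce, from a face $F$ failing the facet inequality, an \emph{actual minimal} generator $x^{\bf b}$ of $I^{(\ell)}$ witnessing $b_i\le a_i$ for all $i\notin F$, and this requires care because the generators of a symbolic power can be combinatorially intricate (the multiplicities $b_i$ must simultaneously meet the $\ell$-threshold on the complement of \emph{every} facet, not just those containing $F$). The clean way around this is to observe that for testing the bullet condition it is enough to use generators, and for \emph{each} facet $F'$ separately one has the generator of $I^{(\ell)}$ obtained by raising the variables $x_i$, $i\notin F'$, to multiplicities summing to $\ell$ — but one must check such a monomial actually lies in $I^{(\ell)}$, i.e.\ meets the threshold on every other facet's complement too, which follows since $[n]\setminus F''$ and $[n]\setminus F'$ overlap enough precisely because $F',F''$ are distinct facets. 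Handling this facet-intersection bookkeeping, and the degenerate cases (e.g.\ $F=\emptyset$, or $\ell=1$ where the statement reduces to the familiar $\Delta_{\bf a}(I_{\Delta})=\langle F\in\mathcal F(\Delta):a_i=0\ \forall i\notin F\rangle$), is the routine but delicate part of the argument.
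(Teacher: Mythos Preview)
The paper does not prove this lemma; it is quoted from \cite{MiT1} without argument. Your ``if'' direction is correct and is the standard step. The ``only if'' direction, however, has a genuine gap. You propose to build a witness monomial $x^{\bf b}\in I^{(\ell)}$ \emph{supported off $F$} with $b_i\le a_i$ for all $i\notin F$; but a monomial supported on $[n]\setminus F$ need not lie in $I^{(\ell)}$, because a facet $F''$ with $F\not\subseteq F''$ may well satisfy $\sum_{i\notin F'',\,i\notin F}a_i<\ell$. Your proposed rescue in the last paragraph---that a monomial supported on $[n]\setminus F'$ of total degree $\ell$ automatically meets the $\ell$-threshold on $[n]\setminus F''$ because the complements ``overlap enough''---is false: take $\Delta$ with facets $\{1\},\{2\},\{3\}$ and $\ell=2$; then $x_2^2$ is supported in $[3]\setminus\{1\}$ with total degree $2$, yet $x_2^2\notin(x_1,x_3)^2=P_{\{2\}}^2$, so $x_2^2\notin I^{(2)}$.

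The fix is easy and actually streamlines the whole proof. Define $c_i=a_i$ for $i\notin F$ and $c_i$ large (say $c_i\ge\ell$) for $i\in F$. The bullet condition ``for every $x^{\bf b}\in G(I^{(\ell)})$ there exists $i\notin F$ with $b_i>a_i$'' is precisely the statement that no minimal generator of $I^{(\ell)}$ divides $x^{\bf c}$, i.e.\ $x^{\bf c}\notin I^{(\ell)}$. Using $I^{(\ell)}=\bigcap_{F''\in\mathcal{F}(\Delta)}P_{F''}^{\ell}$, this says there is a facet $F''$ with $\sum_{i\notin F''}c_i\le\ell-1$. Any such $F''$ must contain $F$ (otherwise the sum includes a large $c_i$), and for $F''\supseteq F$ the sum equals $\sum_{i\notin F''}a_i$. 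This yields both inclusions at once, with no need to describe $G(I^{(\ell)})$ explicitly or to worry about facet-complement combinatorics.
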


\subsection{Linkage}

Let $R$ be a Gorenstein ring, and $I$, $J$ ideals of $R$. 
$I$ and $J$ said to be \textit{directly linked}, denoted by $I \sim J$, 
if there exists a regular sequence 
$\underline{z}=z_1,\ldots,z_h$ in $I \cap J$ such that 
$J = (\underline{z}) \colon I$ and $I = (\underline{z}) \colon J$. 
\par
Assume that $I$ is Cohen-Macaulay ideal of height $h$ and 
$\underline{z}=z_1,\ldots,z_h$ is a regular sequence 
contained in $I$. 
If we set $J = (\underline{z}) \colon I$, then 
$I=(\underline{z}) \colon J$ and thus $I \sim J$.  
\par 
Moreover, $I$ is said to be \textit{linked} to $J$ 
(or $I$ lies in the linkage class of $J$) if 
there exists a sequence of ideals of direct links 
\[
 I = I_0 \sim I_1 \sim \cdots \sim I_r =J. 
\] 
One can easily see that $\sim$ is an equivalence relation of ideals and 
any two complete intersection with the same height belongs to the same class.  
In particular, $I$ is called \textit{licci} if 
$I$ lies in the linkage class of a complete intersection ideal. 
See e.g. \cite{Vas} for more details.

\medskip
\section{Quasi-Buchsbaumness of the second powers and Gorensteinness}

In this section we consider quasi-Buchsbaum property of the second power of 
the Stanley-Reisner ideal $I_{\Delta}$. 
The main purpose of this section is to prove the 
following theorem$:$
 
\begin{thm} \label{Buchsbaum}
Let $S=K[x_1,\ldots,x_n]$ be a polynomial ring over a field $K$, 
and let $\Delta$ be a simplicial complex on $V=[n]$.  
Suppose that $d = \dim S/I_{\Delta} \ge 3$. 
If $S/I_{\Delta}^2$ is quasi-Buchsbaum for any field $K$
then $\Delta$ is Gorenstein.  
\end{thm}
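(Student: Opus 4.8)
The plan is to reduce the statement to a local cohomology computation via Takayama's formula, using Lemma~\ref{MTrep} to identify the relevant subcomplexes $\Delta_{\bf a}(I_\Delta^2)$, and then to translate quasi-Buchsbaumness into homological vanishing conditions on links that force $\Delta$ to be Gorenstein. First I would recall that $\Delta$ Gorenstein is equivalent (over all fields) to $K[\Delta]$ being Cohen--Macaulay together with the ``Euler characteristic'' condition $\widetilde{H}_{\dim\link F}(\link_\Delta F;K)\cong K$ for every face $F$ (including $F=\emptyset$) with $\link_\Delta F$ of positive dimension, and trivial reduced homology below the top degree — this is the Reisner/Stanley criterion after passing to $\core\Delta$. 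So the target is: quasi-Buchsbaumness of $S/I_\Delta^2$ implies (a) $K[\Delta]$ is Cohen--Macaulay, and (b) the link homology is one-dimensional on top. Since quasi-Buchsbaum rings have (FLC), and $S/I_\Delta^2$ having (FLC) already forces (via the standard short exact sequence $0\to I_\Delta/I_\Delta^2\to S/I_\Delta^2\to S/I_\Delta\to 0$ and the fact that $I_\Delta/I_\Delta^2$ is well understood) that $K[\Delta]$ is pure and all proper links are Cohen--Macaulay — this is essentially the content of the (FLC) discussion in Section~1 — the main work is extracting Cohen--Macaulayness of $K[\Delta]$ itself and the Gorenstein-type homology constraint.

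The key computational step is to write, for each ${\bf a}\in\mathbb Z^n$, the local cohomology $H^i_\frm(S/I_\Delta^2)_{\bf a}$ in terms of $\widetilde H^{i-|G_{\bf a}|-1}$ of the subcomplex $\Delta_{\bf a}(I_\Delta^2)$. Here one must analyze which ${\bf a}$ give a nonzero contribution: since $I_\Delta^2$ is generated in degree $2$ by products of two minimal non-faces, the exponents $\rho_i$ are at most $2$, so ${\bf a}$ ranges over a bounded box in each coordinate up to the ``negative part'' $G_{\bf a}$. The crucial observation is that $\Delta_{\bf a}(I_\Delta^2)$, by Lemma~\ref{MTrep} applied with $\ell$ replaced by the appropriate symbolic comparison, or by direct inspection of the generators of $I_\Delta^2$ versus $I_\Delta^{(2)}$, is itself a link (or cone over a link) of $\Delta$ for the relevant ${\bf a}$ — typically $\Delta_{\bf a}(I_\Delta^2)=\link_\Delta F$ for $F=\supp^+({\bf a})$ in a suitable range, with possibly a correction coming from the discrepancy $I_\Delta^{(2)}\supsetneq I_\Delta^2$ on the ``triangle'' exponents. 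Then $\frm H^i_\frm(S/I_\Delta^2)=0$ for all $i<d$ says precisely that the graded pieces in ``adjacent'' degrees must match, which — reading off the homology of links — forces each $\widetilde H_j(\link_\Delta F)=0$ for $j<\dim\link_\Delta F$ (Cohen--Macaulayness of $K[\Delta]$) and $\widetilde H_{d-1}(\Delta)$, together with the top homology of proper links, to be one-dimensional (Gorensteinness). The hypothesis ``for any field $K$'' is used here exactly to upgrade $K$-homology statements into the field-independent Gorenstein conclusion.

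The main obstacle, I expect, is the bookkeeping around the degrees ${\bf a}$ where $I_\Delta^2$ and $I_\Delta^{(2)}$ differ: on those exponents $\Delta_{\bf a}(I_\Delta^2)$ is strictly smaller than the corresponding link, and one must check that the extra (possibly nonvanishing) cohomology these contribute is annihilated by $\frm$ only if the link homology is forced into the Gorenstein pattern — in other words, one has to rule out a ``bad'' ${\bf a}$ that would detect non-Gorensteinness yet still satisfy the quasi-Buchsbaum annihilation. Concretely, the danger is an ${\bf a}$ with some coordinate equal to $2$ (coming from a square $x_i^2$ dividing a generator) where the subcomplex degenerates; I would handle this by comparing the $\bf a$-component with its neighbors obtained by decreasing that coordinate by $1$, using the $\frm$-multiplication map $H^i_\frm(S/I_\Delta^2)_{\bf a}\to H^i_\frm(S/I_\Delta^2)_{{\bf a}+e_i}$ (or its dual), which must be zero, to squeeze the offending homology to zero or to force the Gorenstein condition. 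The hypothesis $d\ge 3$ is what makes this squeezing possible: it guarantees enough intermediate local cohomology degrees $i=1,\dots,d-1$ are in play so that the annihilation conditions are not vacuous and actually pin down the homology of links of small faces. Once these vanishing and one-dimensionality statements are assembled for all faces, Reisner's criterion and the Stanley characterization of Gorenstein complexes (applied to $\core\Delta$) give that $\Delta$ is Gorenstein.
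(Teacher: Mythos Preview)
Your plan has a genuine gap: you never explain how quasi-Buchsbaumness of $S/I_\Delta^2$ forces the \emph{one-dimensionality} of the top reduced homology of links, which is the heart of the Gorenstein criterion. The condition $\frm H^i_\frm(S/I_\Delta^2)=0$ for $i<d$ says that certain multiplication maps between graded pieces vanish; this can plausibly be unwound into vanishing of sub-top link homology (Cohen--Macaulayness), and indeed the paper extracts exactly that in Lemma~\ref{Q-Bbm}. But you assert without mechanism that the same annihilation ``forces \ldots\ the top homology of proper links to be one-dimensional.'' Why would $x_j$ acting as zero on a cohomology class in some degree $<d$ tell you that $\dim_K\widetilde H_{\dim\link F}(\link_\Delta F)=1$ rather than $2$ or $5$? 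The graded components you are comparing live in degrees below $d$, so they do not obviously see the top-degree homology at all. Your paragraph on the ``main obstacle'' (the discrepancy between $I_\Delta^2$ and $I_\Delta^{(2)}$) also does not address this: it is about bookkeeping on which subcomplex $\Delta_{\bf a}(I_\Delta^2)$ arises, not about bounding dimensions.

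The paper's route is entirely different and avoids this problem. Quasi-Buchsbaumness gives (FLC), hence $S/I_{\link_\Delta\{x\}}^2$ is Cohen--Macaulay for every vertex $x$; then Lemma~\ref{Vasconcelos} --- which in turn rests on the \emph{classification} of one-dimensional complexes with $S/I_\Delta^2$ Cohen--Macaulay from \cite{MiT1} and on \cite[Proposition~1.11]{TY} --- yields that every vertex link is Gorenstein. Separately, a short Takayama argument (Lemma~\ref{Q-Bbm}) shows $K[\Delta]$ itself is Cohen--Macaulay. The hypothesis $d\ge 3$ is used so that any $1$-dimensional link is already a link inside some $\link_\Delta\{x\}$, hence Gorenstein; then Stanley's criterion (via Euler characteristic, with the ``any field'' hypothesis bridging from $K=\ZZ/2\ZZ$) finishes. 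The one-dimensionality of top link homology is thus obtained from external low-dimensional classification results, not from the local-cohomology annihilation condition you propose to squeeze it out of.
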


\par \vspace{2mm}
We first prove the following lemma, 
which is closely related to the conjecture 
by Vasconcelos (see also \cite[Conjecture 3.12]{SVV}):
Let $R$ be a regular local ring and $I$ a Cohen-Macaulay 
ideal of $R$. If $I$ is syzygetic and $I/I^2$ is Cohen-Macaulay, 
then $I$ is a Gorenstein ideal.  
The following lemma easily follows from the classification theorems for
simplicial complexes $\Delta$ such that $S/I_{\Delta}^2$ are Cohen-Macaulay
in one and  two-dimensional cases.
See \cite{MiT1, TrTu}.

\begin{lemma} \label{Vasconcelos}
Let $\Delta$ be a simplicial complex on $V=[n]$, and 
let $I_{\Delta} \subseteq S=K[x_1,\ldots,x_n]$ denote 
the Stanley-Reisner ideal of $\Delta$. 
If $S/I_{\Delta}^2$ is Cohen-Macaulay for any field $K$, then 
$\Delta$ is Gorenstein. 
\end{lemma}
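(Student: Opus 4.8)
The plan is to reduce the statement to the already-classified low-dimensional cases together with a general structural fact, namely that Cohen--Macaulayness of $S/I_\Delta^2$ forces the link of every face to again have a Cohen--Macaulay square. First I would recall that if $S/I_\Delta^2$ is Cohen--Macaulay over $K$, then in particular $S/I_\Delta^{(2)}$ is Cohen--Macaulay and $I_\Delta^2 = I_\Delta^{(2)}$; moreover $\Delta$ is automatically pure and (FLC), so $\link_\Delta F$ is Cohen--Macaulay for every nonempty face $F$. The key localization step is the identity $(I_\Delta)_{x_i} = I_{\link_\Delta \{i\}}$ after an obvious change of variables, which passes to powers: $(I_\Delta^2)_{x_i} = I_{\link_\Delta\{i\}}^2$ (up to a polynomial extension and inverting $x_i$). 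Since Cohen--Macaulayness localizes, $S'/I_{\link_\Delta\{i\}}^2$ is Cohen--Macaulay for every vertex $i$, and iterating, $S_F/I_{\link_\Delta F}^2$ is Cohen--Macaulay for every face $F$ over any field.

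Next I would run a descending induction on $d = \dim S/I_\Delta$. The base cases $d \le 2$, i.e. $\dim \Delta \le 1$, are exactly the classifications in \cite{MiT1}: the only non-complete-intersection $\Delta$ with $S/I_\Delta^2$ Cohen--Macaulay is the pentagon, and in all these cases $\Delta$ is Gorenstein (a complete intersection is Gorenstein, and the pentagon is the boundary of a Gorenstein complex, being the $1$-sphere). For the inductive step with $d \ge 3$: for every vertex $i \in \core V$, the link $\link_\Delta\{i\}$ has a Cohen--Macaulay square of dimension $d-2 \ge 1$, hence by induction (for $d-1 \ge 2$) or by the base case (for $d-1 = 2$), $\link_\Delta\{i\}$ is Gorenstein. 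Being Gorenstein in particular means each such link is Gorenstein over any field, so its reduced homology is that of a sphere.

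From there I would invoke Reisner's criterion together with the standard characterization of Gorenstein complexes (Stanley): $\Delta$ over $\core V$ is Gorenstein if and only if it is Cohen--Macaulay and $\widetilde{H}_i(\link_\Delta F; K) = \widetilde{H}_i(\text{sphere of dimension } d-1-|F|)$ for every face $F$ including $F = \emptyset$. We already know $\Delta$ is Cohen--Macaulay and that the links of all vertices (hence, by a further localization, the links of all nonempty faces) are Gorenstein of the expected dimension, so the sphere condition holds for all $F \ne \emptyset$. The remaining point is the case $F = \emptyset$: one must show $\widetilde{H}_i(\Delta; K)$ vanishes except in top degree $d-1$, where it is $K$. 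Here one uses that $\Delta$ is Cohen--Macaulay of dimension $d-1$ (so $\widetilde{H}_i(\Delta) = 0$ for $i < d-1$ by Reisner), leaving only the top homology $\widetilde{H}_{d-1}(\Delta; K)$ to be pinned down as one-dimensional. This top Betti number is detected by $I_\Delta^2$: unmixedness $I_\Delta^2 = I_\Delta^{(2)}$ together with the Cohen--Macaulay, hence Gorenstein, type of $S/I_\Delta^2$ being computable via linkage — $I_\Delta^{(2)}$ is linked to $I_\Delta$ in a suitable sense — forces $\widetilde\chi(\Delta) = \pm 1$, which combined with the vanishing below the top degree gives $\dim_K \widetilde{H}_{d-1}(\Delta;K) = 1$.

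The main obstacle I expect is precisely this last point: controlling the top reduced homology of $\Delta$ itself, since the inductive hypothesis only gives information about proper links. The clean way around it is to exploit that $S/I_\Delta^2 = S/I_\Delta^{(2)}$ is Cohen--Macaulay and to compute its canonical module, or equivalently its type, using the explicit structure of $I_\Delta^{(2)} = \bigcap_{F \in \mathcal F(\Delta)} P_F^2$ and a mapping-cone/linkage argument relating the last Betti numbers of $S/I_\Delta^2$ to those of $S/I_\Delta$; alternatively, one verifies directly via Takayama's formula (Lemma \ref{MTrep}) that the $\mathbb Z^n$-graded top local cohomology of $S/I_\Delta^2$ is one-dimensional in the relevant degree, which translates back to $\widetilde H_{d-1}(\Delta;K) = K$. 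Once the sphere condition is verified for $F=\emptyset$ as well, Stanley's criterion yields that $\Delta$ restricted to its core is Gorenstein, and since the Gorenstein property of a complex is by definition that of its core, $\Delta$ is Gorenstein, completing the induction.
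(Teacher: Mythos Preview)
Your proposal correctly identifies the overall strategy (localize to links, reduce to low dimensions, then verify Stanley's criterion for Gorensteinness) and you pinpoint the genuine difficulty: the case $F=\emptyset$, i.e.\ showing $\widetilde{H}_{d-1}(\Delta;K)\cong K$. However, your proposed resolution of this step is not an argument. The claim that ``$I_\Delta^{(2)}$ is linked to $I_\Delta$ in a suitable sense'' is unsubstantiated and not true in the needed generality, and the phrase ``Cohen--Macaulay, hence Gorenstein, type of $S/I_\Delta^2$'' conflates two different things---Cohen--Macaulayness of $S/I_\Delta^2$ certainly does not make it Gorenstein, and no linkage computation of its type is supplied. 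Your alternative via Takayama's formula is circular: the degree-$\mathbf{0}$ piece of $H^d_{\mathfrak m}(S/I_\Delta^2)$ computed by Takayama is exactly $\widetilde{H}^{d-1}(\Delta_{\mathbf 0}(I_\Delta^2);K)=\widetilde{H}^{d-1}(\Delta;K)$, which is the very group you are trying to determine.

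The paper closes this gap with a trick you do not use, and it is precisely here that the hypothesis ``for any field $K$'' is exploited. Having shown that the links of dimension $0$ and $1$ are Gorenstein (so in particular $\Delta$ is a Cohen--Macaulay pseudomanifold), the paper first takes $K=\mathbb{Z}/2\mathbb{Z}$: by Stanley's criterion \cite[Chapter II, Theorem 5.1]{St}, these low-codimension conditions already force $K[\Delta]$ to be Gorenstein over $\mathbb{Z}/2\mathbb{Z}$. This yields $\widetilde{\chi}(\Delta)=(-1)^{d-1}$. But $\widetilde{\chi}(\Delta)=-1+\sum(-1)^i f_i(\Delta)$ is independent of the field, so $\widetilde{\chi}(\Delta)=(-1)^{d-1}$ over \emph{every} $K$; together with Cohen--Macaulayness and the link conditions, Stanley's criterion applied a second time gives Gorensteinness over every $K$. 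Your outline never invokes a special field, so it cannot access this field-independence argument, and without it the $F=\emptyset$ case remains open.
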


\begin{proof}
We may assume that $\Delta = \core \Delta$. 
Let $K$ be a field and fix it. 
Let $F$ be a face of $\Delta$ and 
put  $\Gamma = \link_{\Delta} F$.   
\par \vspace{2mm}
First note that $S/I_{\Gamma}^2$ and $S/I_{\Delta}$ are Cohen-Macaulay 
if so is $S/I_{\Delta}^2$. 
Indeed, since $S/I_{\Delta}^2$ is Cohen-Macaulay and 
$I_{\Delta} = \sqrt{I_{\Delta}^2}$, we 
have that $S/I_{\Delta}$ is Cohen-Macaulay; see e.g. \cite{HTT}. 
On the other hand, by localizing at $x_F=\prod_{i \in F} x_i$, we get 
\[
  I_{\Delta} S[x_F^{-1}] 
= (I_{\Gamma}, x_{i_1},\ldots,x_{i_k})S[x_F^{-1}]
\]
for some variables $x_{i_1},\ldots,x_{i_k}$. 
Hence the assumption implies that 
$(I_{\Gamma}, x_{i_1},\ldots,x_{i_k})^2$ is a Cohen-Macaulay ideal. 
This yields that $I_{\Gamma}^2$ is also Cohen-Macaulay.  
\par \vspace{2mm}
Suppose that $\dim \Gamma =0$. 
Then one can take a complete graph $G$ such that $I(G)=I_{\Gamma}$. 
Since $S/I(G)^2$ is Cohen-Macaulay, we have $I(G)^{(2)} = I(G)^2$.  
Hence $G$ does not contain any triangle (e.g. see Corollary \ref{Triangle}).  
Thus $\sharp(V(\Gamma)) = \sharp(V(G)) \le 2$. 
\par \vspace{2mm}
By the above argument, $\Lambda=\link_{\Delta} F$ 
is a locally complete intersection complex whenever $\dim \Lambda =1$. 
Moreover, since $S/I_{\Lambda}$ is Cohen-Macaulay 
and thus $\Lambda$ is connected, 
$\Lambda$ is an $n$-cycle or an $n$-pointed path; 
see \cite[Proposition 1.11]{TY}.
On the other hand, since $\diam \Lambda \le 2$ by \cite[Theorem 2.3]{MiT1}, 
we get
$n \le 3$ if $\Lambda$ is an $n$-pointed path.  
Hence $\Lambda = \link_{\Delta} F$ is Gorenstein. 
\par \vspace{2mm} 
Now suppose that $K=\mathbb{Z}/2\mathbb{Z}$. 
By \cite[Chapter II, Theorem 5.1]{St}, $K[\Delta]$ is Gorenstein. 
Then we get $\widetilde{\chi}(\Delta)=(-1)^{d-1}$.
\par 
Let $K$ be any field. 
Then $\widetilde{\chi}(\Delta)=(-1)^{d-1}$ because 
$\widetilde{\chi}(\Delta)$ does not depend on $K$.  
Therefore we conclude that $\Delta$ is Gorenstein over $K$ 
by \cite[Chapter II, Theorem 5.1]{St} again. 
\end{proof}

\par 
A complex $\Delta$ is called a \textit{locally Gorenstein} complex  
if $\link_{\Delta}\{x\}$ is Gorenstein for every vertex $x \in V$. 
Then the following corollary immediately follows from 
Lemma \ref{Vasconcelos}.

\begin{cor} \label{FLC-Vas}
If $S/I_{\Delta}^2$ has $($FLC$)$ for any field $K$, 
then $\Delta$ is a locally Gorenstein complex. 
\end{cor}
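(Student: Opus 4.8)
The plan is to reduce the (FLC) hypothesis to the Cohen--Macaulay hypothesis of Lemma \ref{Vasconcelos}, not for $\Delta$ itself but for each vertex link. Recall that if $S/I_{\Delta}^2$ has (FLC) over any field $K$, then in particular $S/I_{\Delta}^2$ is quasi-Buchsbaum over any $K$ (these two conditions coincide for Stanley--Reisner type rings, as noted in the preliminaries). The key observation is that passing to a vertex link should \emph{improve} the singularity: localizing at a variable kills one cohomological degree of obstruction, so a punctured-spectrum condition like (FLC) on $\Delta$ should promote to a Cohen--Macaulay condition on each $\link_{\Delta}\{x\}$.

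Concretely, first I would fix a vertex $x \in V$ and set $\Gamma = \link_{\Delta}\{x\}$. As in the proof of Lemma \ref{Vasconcelos}, localizing $I_{\Delta}$ at $x_x = x$ gives
\[
 I_{\Delta}\,S[x^{-1}] = (I_{\Gamma},\, x_{i_1},\dots,x_{i_k})\,S[x^{-1}]
\]
for some subset of the remaining variables (those vertices not joined to $x$), so that $I_{\Delta}^2 S[x^{-1}]$ is, up to the regular extra variables and inverting $x$, the square of $I_{\Gamma}$ in the appropriate polynomial subring. Now the point is that (FLC) of $S/I_{\Delta}^2$ means $H^i_{\frm}(S/I_{\Delta}^2)$ has finite length for all $i < d$, equivalently these modules are supported only at $\frm$; hence after inverting any variable the localized ring $S[x^{-1}]/I_{\Delta}^2 S[x^{-1}]$ has \emph{vanishing} lower local cohomology at every prime, i.e. it is Cohen--Macaulay (it is a localization of an (FLC) ring away from the vertex of the punctured spectrum). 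Therefore $(I_{\Gamma}, x_{i_1},\dots,x_{i_k})^2$ is Cohen--Macaulay, and since the $x_{i_j}$ form a regular sequence modulo this ideal, $I_{\Gamma}^2$ is Cohen--Macaulay over the field $K$. As this holds for every $K$, Lemma \ref{Vasconcelos} applies to $\Gamma$ and yields that $\Gamma = \link_{\Delta}\{x\}$ is Gorenstein. Since $x$ was arbitrary, $\Delta$ is locally Gorenstein.

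The step I expect to require the most care is the claim that inverting a variable in an (FLC) Stanley--Reisner-square ring produces a Cohen--Macaulay ring, i.e. the passage ``(FLC) on $\Delta$ $\Rightarrow$ Cohen--Macaulay on the link.'' One must be careful that (FLC) is a statement about $H^i_{\frm}$ having finite length, which is exactly equivalent to $(S/I_{\Delta}^2)_P$ being Cohen--Macaulay for every non-maximal prime $P$; then since $S[x^{-1}]/I_{\Delta}^2S[x^{-1}]$ is a localization whose primes all correspond to non-maximal primes of $S/I_{\Delta}^2$ (as $x$ is a nonzerodivisor direction missing the relevant facet), Cohen--Macaulayness is inherited. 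One should double-check the bookkeeping of which variables $x_{i_1},\dots,x_{i_k}$ appear and that the square behaves well under this localization (it does, since localization commutes with forming powers), but these are the routine verifications already implicit in the proof of Lemma \ref{Vasconcelos}.
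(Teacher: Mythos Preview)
Your proposal is correct and follows essentially the same approach as the paper: reduce to the link by localizing at a vertex variable, use that (FLC) means Cohen--Macaulay away from $\frm$ so the localized square is Cohen--Macaulay, and then invoke Lemma~\ref{Vasconcelos}. The paper's proof is a two-line version of yours; your added detail on why inverting $x$ forces Cohen--Macaulayness (namely, that all primes of $S[x^{-1}]/I_{\Delta}^2S[x^{-1}]$ correspond to non-maximal primes of $S/I_{\Delta}^2$) and on the passage from $(I_{\Gamma},x_{i_1},\dots,x_{i_k})^2$ to $I_{\Gamma}^2$ is exactly what the paper leaves implicit, having already carried out the analogous bookkeeping in the proof of Lemma~\ref{Vasconcelos}.
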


\begin{proof} 
The assumption implies that $S/I_{\link_{\Delta}\{x\}}^2$ is Cohen-Macaulay
for every vertex $x \in V$. 
Then $\link_{\Delta}\{x\}$ is Gorenstein by Lemma \ref{Vasconcelos}.  
\end{proof}

\begin{lemma} \label{Q-Bbm}
Suppose $d \ge 2$. 
If $S/I_{\Delta}^2$ is quasi-Buchsbaum, then $S/I_{\Delta}$ is Cohen-Macaulay.  
\end{lemma}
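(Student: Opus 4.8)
The plan is to separate the argument into a part that uses only that $S/I_{\Delta}^{2}$ has $(\mathrm{FLC})$ and a part that uses the full quasi‑Buchsbaum hypothesis $\frm H^{i}_{\frm}(S/I_{\Delta}^{2})=0$ for $i<d$. First, since a quasi‑Buchsbaum ring has $(\mathrm{FLC})$, the ring $S/I_{\Delta}^{2}$ is equidimensional (so $\Delta$ is pure) and $(S/I_{\Delta}^{2})_{P}$ is Cohen--Macaulay for every prime $P\ne\frm$. Localizing at a variable $x_{v}$ exactly as in the proof of Corollary~\ref{FLC-Vas}, the ring $S/I_{\link_{\Delta}\{v\}}^{2}$ is Cohen--Macaulay, hence $\link_{\Delta}\{v\}$ is Cohen--Macaulay over $K$ by \cite{HTT}. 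Since the link of any nonempty face of $\Delta$ is an iterated vertex link, Reisner's criterion shows that $S/I_{\Delta}$ is Cohen--Macaulay over $K$ as soon as $\widetilde{H}^{\,i-1}(\Delta;K)=0$ for $1\le i\le d-1$, and it is this vanishing that the second half of the argument supplies.

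For the second half, work inside the \v{C}ech complex $C^{\bullet}=C^{\bullet}(x_{1},\dots,x_{n};\,S/I_{\Delta}^{2})$, which computes $H^{\bullet}_{\frm}(S/I_{\Delta}^{2})$ together with its $\ZZ^{n}$‑grading, and compare the graded strands in degrees $\mathbf 0$ and $e_{k}$, $k\in[n]$ (where $e_{k}$ is the $k$th unit vector of $\ZZ^{n}$). Every minimal generator of $I_{\Delta}^{2}$ is a product $x_{\sigma}x_{\tau}$ of squarefree monomials supported on nonfaces $\sigma,\tau$; because such a product never divides a squarefree monomial supported on a face, one checks that for $J\in\Delta$ the residues of $x_{J}^{\,t}$ and of $x_{k}x_{J}^{\,t}$ are nonzero modulo $I_{\Delta}^{2}$ for all $t$, while for $J\notin\Delta$ they lie in $I_{\Delta}^{2}$ once $t\ge 2$. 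Hence on the summand $(S/I_{\Delta}^{2})_{x_{J}}$ the degree‑$\mathbf 0$ and degree‑$e_{k}$ components are each $K$ when $J\in\Delta$ and $0$ otherwise, and multiplication by $x_{k}$ maps the one onto the other (it is multiplication by a unit when $k\in J$, and sends a generator to a generator when $k\notin J$). Therefore $C^{\bullet}_{\mathbf 0}$ is the reduced simplicial cochain complex of $\Delta$ over $K$ (with the usual shift), so that $H^{i}_{\frm}(S/I_{\Delta}^{2})_{\mathbf 0}\cong\widetilde{H}^{\,i-1}(\Delta;K)$, and multiplication by $x_{k}$ induces an \emph{isomorphism} $H^{i}_{\frm}(S/I_{\Delta}^{2})_{\mathbf 0}\xrightarrow{\ \sim\ }H^{i}_{\frm}(S/I_{\Delta}^{2})_{e_{k}}$.

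Now fix $i$ with $1\le i\le d-1$. Quasi‑Buchsbaumness gives $x_{k}\cdot H^{i}_{\frm}(S/I_{\Delta}^{2})=0$; restricting to the degree‑$\mathbf 0$ component and using the injectivity just obtained forces $H^{i}_{\frm}(S/I_{\Delta}^{2})_{\mathbf 0}=0$, that is, $\widetilde{H}^{\,i-1}(\Delta;K)=0$. Combined with the first paragraph, $S/I_{\Delta}$ is Cohen--Macaulay over $K$. The only non‑routine point is the second paragraph: one has to notice that the right move is to compare the $\mathbf 0$‑ and $e_{k}$‑graded strands of the local cohomology of the \emph{square} and to verify that $x_{k}$ acts injectively between them — this is precisely what upgrades the annihilation statement $\frm H^{i}_{\frm}(S/I_{\Delta}^{2})=0$ into the honest topological vanishing $\widetilde{H}^{\,i-1}(\Delta;K)=0$. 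The $(\mathrm{FLC})$ reduction of the first paragraph and the combinatorial identification of the graded strands are standard.
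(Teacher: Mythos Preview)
Your argument is correct and follows essentially the same route as the paper's proof. Both proofs hinge on the observation that multiplication by a variable $x_k$ induces an isomorphism
\[
H_{\frm}^{i}(S/I_{\Delta}^{2})_{\mathbf 0}\;\xrightarrow{\ \sim\ }\;H_{\frm}^{i}(S/I_{\Delta}^{2})_{e_{k}},
\]
so that quasi-Buchsbaumness forces $\widetilde{H}^{\,i-1}(\Delta;K)=0$ for $i<d$. The paper obtains this isomorphism by invoking Takayama's formula together with the identification $\Delta_{\mathbf 0}(I_{\Delta}^{2})=\Delta_{e_{1}}(I_{\Delta}^{2})=\Delta$ from \cite{TeTr} and the commutative diagram from \cite{MiN}; you instead verify the same fact by a direct computation in the \v{C}ech complex, which is exactly what Takayama's formula encodes in these two multidegrees. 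For the reduction to the vanishing of $\widetilde{H}^{\,i-1}(\Delta;K)$, the paper passes through the statement that $(\mathrm{FLC})$ for $S/I_{\Delta}^{2}$ implies $(\mathrm{FLC})$ for $S/I_{\Delta}$ via \cite[Theorem~2.6]{HTT}, whereas you localize at a vertex to get Cohen--Macaulayness of the links and then apply Reisner's criterion; these are two phrasings of the same localization principle.
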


\begin{proof}
By assumption that $S/I_{\Delta}^2$ has (FLC).    
Then $S/I_{\Delta}$ has (FLC) by \cite[Theorem 2.6]{HTT} 
and thus it is Buchsbaum. 
\par 
Now suppose that $S/I_{\Delta}$ is \textit{not} Cohen-Macaulay. 
Then there exists an $i$ with $0 \le i \le d-2$ such that 
$H_{\frm}^{i+1}(S/I_{\Delta})_0 \cong \widetilde{H}_i(\Delta;K) \ne 0$. 
Then we get the following commutative diagram (see \cite{MiN})

\begin{picture}(400,70)
\put(120,50){$H_{\frm}^{i+1}(S/I_{\Delta}^{2})_{\bf 0}$} 
\put(185,55){\vector(1,0){40}} 
\put(145,42){\vector(0,-1){20}}
\put(200,58){$x_1$}
\put(240,50){$H_{\frm}^{i+1}(S/I_{\Delta}^{2})_{{\bf e}_1}$} 
\put(116,13){$\widetilde{H}^i(\Delta_{\bf 0}(I_{\Delta}^2))$} 
\put(190,15){\vector(1,0){35}} 
\put(270,42){\vector(0,-1){20}}
\put(243,13){$\widetilde{H}^i(\Delta_{{\bf e}_1}(I_{\Delta}^2))$,} 
\end{picture}
\par \vspace{2mm} \par \noindent 
where the bottom map is identity because 
$\Delta_{\bf 0}(I^{2}) = \Delta_{{\bf e}_1}(I^2) = \Delta$ by \cite{TeTr} and 
the vertical maps are isomorphism.    
This yields $x_1 H_{\frm}^{i+1}(S/I_{\Delta}^2) \ne 0$. 
But this contradicts the assumption. 
\end{proof}

\begin{remark}
{\rm 
We have an analogous result in the symbolic power case. 
Namely,
if $S/I_{\Delta}^{(2)}$ is quasi-Buchsbaum, then $S/I_{\Delta}$ is Cohen-Macaulay.
The proof is almost the same since we have  
$\Delta_{\bf 0}(I^{(2)}) = \Delta_{{\bf e}_1}(I^{(2)}) = \Delta$. 
} 
\end{remark}

\par 
We are now ready to prove Theorem \ref{Buchsbaum}.

\begin{proof}[Proof of Theorem \ref{Buchsbaum}]
By assumption and Corollary  \ref{FLC-Vas}, we have that 
$\Delta$ is locally Gorenstein. 
Moreover, $\Delta$ is Cohen-Macaulay by Lemma \ref{Q-Bbm}. 
Take any face $F$ of $\Delta$ with $\dim \link_{\Delta} F =1$. 
As $d \ge 3$, $\link_{\Delta} F$ is given by some 
link of $\link_{\Delta} \{x\}$ for $x \in F$. 
Hence such a $\link_{\Delta} F$ is also Gorenstein. 
By a similar argument as in the proof of Lemma \ref{Vasconcelos}, 
we get the required assertion. 
\end{proof}

\par \vspace{2mm}
The Gorensteinness of $S/I_{\Delta}$ does not necessarily imply 
the quasi-Buchsbaumness of $S/I_{\Delta}^2$.

\par 
We cannot replace the Cohen-Macaulayness of $S/I_{\Delta}^2$ with that of 
$S/I_{\Delta}^{(2)}$ in Lemma \ref{Vasconcelos} as the next example shows.  

\begin{exam} \label{phantomPentagon}
Let $k \ge 2$ be a given integer. 
Let $I$ be the Stanley-Reisner ideal of the following simplicial complex $\Delta$,
Then since $\diam \Delta \le 2$, $S/I^{(2)}$ is Cohen-Macaulay by \cite{MiT1}, 
but $S/I^2$ is not.
Moreover, $S/I$ is not Gorenstein. 

\par 
\begin{picture}(400,70)
  \thicklines
  \put(195,55){\circle*{5}}  
  \put(170,31.5){\circle*{5}}  
  \put(183.5,9){\circle*{5}}  
  \put(206.5,9){\circle*{5}}  
  \put(220,31.5){\circle*{5}}  
  \put(170,55){\circle*{5}}
  \put(220,55){\circle*{5}}
  \put(200,53){$\cdots$}
  \put(165,60){{\tiny $v_1$}}  
  \put(190,60){{\tiny $v_2$}}  
  \put(218,60){{\tiny $v_k$}}  
  \put(162,33){{\tiny $w$}}  
  \put(172,8){{\tiny $x$}}  
  \put(210,8){{\tiny $y$}}  
  \put(222,35){{\tiny $z$}}  
  \put(170,32){\line(1,1){22}} 
  \put(170,31.5){\line(0,1){22}} 
  \put(220,32){\line(-1,1){22}} 
  \put(220,31.5){\line(0,1){22}}
  \put(182,11){\line(-3,5){10.8}}  
  \put(208,11){\line(3,5){10.8}}  
  \put(186,9){\line(1,0){18}}  
  \put(170,55){\line(2,-1){50}}
  \put(220,55){\line(-2,-1){50}}
\end{picture}
\end{exam}

\par \vspace{2mm}
In Theorem \ref{Buchsbaum}, we cannot remove the assumption that 
$\dim S/I_{\Delta} \ge 3$ as the next example shows. 

\begin{exam} \label{Counter-Bbm}
Put $I_{\Delta} = (x_1x_3,x_1x_4,x_2x_4)$, the Stanley-Reisner ideal 
of the $4$-pointed path $\Delta$. 
Then $S/I_{\Delta}^2$ is Buchsbaum by \cite[Example 2.9]{TY} and 
$S/I_{\Delta}$ is Cohen-Macaulay but not Gorenstein of dimension $2$. 

\begin{picture}(400,60)
  \thicklines
  \put(80,25){$\Delta=$}
  \put(127,45){\circle*{5}}  
  \put(162,45){\circle*{5}}  
  \put(127,10){\circle*{5}}  
  \put(162,10){\circle*{5}}  
  \put(118,45){{\tiny $1$}}  
  \put(118,10){{\tiny $2$}}  
  \put(169,45){{\tiny $4$}}  
  \put(169,10){{\tiny $3$}}  
  \put(130,10){\line(1,0){29}}  
  \put(127,42){\line(0,-1){29}}  
  \put(162,42){\line(0,-1){29}}  
\end{picture}
\end{exam}

\vspace{2mm}
The following question is valid in the case that char $K=2$,
but the other cases remain open. 

\begin{quest} \label{G-quest}
If $S/I_{\Delta}^2$ is Cohen-Macaulay over a fixed field $K$, 
then is $\Delta$ Gorenstein over $K$?
\end{quest}

\medskip
\section{Cohen-Macaulayness versus $(S_2)$ for second symbolic powers}

\par 
Throughout this section, let $S=K[x_1,\ldots,x_n]$ be a polynomial ring 
over a field $K$. 
Let $\frm=(x_1,\ldots,x_n)S$ be the unique graded maximal ideal of $S$ 
with natural graded structure.  
\par 
In \cite{TeTr} it is proved that for any integer $\ell \ge 3$ and 
for any simplicial complex $\Delta $ on the vertex set $V=[n]$,
$S/I_{\Delta }^{(\ell)}$ is Cohen-Macaulay if and only if it satisfies Serre's  condition $(S_2)$. 
So it is natural to ask the following question. 

\begin{quest} \label{S2-quest}
Let $I$ be the Stanley-Reisner ideal of a simplicial complex $\Delta$ on $V=[n]$.    
Then $S/I^{(2)}$ is Cohen-Macaulay if and only if $S/I^{(2)}$ satisfies $(S_2)$?  
\end{quest}

\par \vspace{2mm}
So the aim of this section is to give a criterion 
for $S/I_{\Delta}^{(2)}$ to satisfy $(S_2)$. 
In order to do that, we prove the following theorem, which is a generalization 
of \cite[Theorem 2.3]{MiT1}. 
Using this, we give a negative answer to the above question; see Example \ref{RealP}. 
Note that in the following Theorem \ref{Depth} and Corollary \ref{S2thm}
if we replace the condition that the diameter is less than or equal to $2$ 
by the connectedness condition
then we have the corresponding condition for  the original Stanley-Reisner ring 
instead of the second symbolic power,
e.g., $\depth S/I_{\Delta} \ge 2$ is equivalent to the connectedness of $\Delta $
if $\dim \Delta \ge 1$. 

\begin{thm} \label{Depth}
Let $\Delta$ be a simplicial complex with $\dim \Delta \ge 1$. 
Then the following conditions are equivalent$:$
\begin{enumerate}
 \item $\depth S/I_{\Delta}^{(2)} \ge 2$ 
$($equivalently, $\depth (S/I_{\Delta}^{(2)})_{\frm} \ge 2$$)$. 
 \item $\diam \Delta^{(1)} \le 2$, where $\Delta^{(1)}$ denotes the $1$-skeleton of $\Delta$. 
\end{enumerate}
\end{thm}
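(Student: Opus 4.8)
The plan is to compute $\depth S/I_\Delta^{(2)}$ via local cohomology, using Takayama's formula together with Lemma \ref{MTrep} to make the combinatorics explicit. Since $\depth S/I_\Delta^{(2)} = \min\{\, i : H^i_\frm(S/I_\Delta^{(2)}) \neq 0 \,\}$ and one always has $H^0_\frm(S/I_\Delta^{(2)}) = 0$ (because $I_\Delta^{(2)}$ is unmixed, in particular has no embedded primes and $\frm$ is not associated when $\dim\Delta\ge 1$), the assertion $\depth S/I_\Delta^{(2)} \ge 2$ is equivalent to $H^1_\frm(S/I_\Delta^{(2)}) = 0$. By Takayama's formula, $H^1_\frm(S/I_\Delta^{(2)})_{\bf a} \cong \widetilde H^{0}(\Delta_{\bf a}(I_\Delta^{(2)}); K)$ for each ${\bf a}\in\ZZ^n$ in the appropriate degree range (and the component vanishes outside $\NN^n$ after the standard reduction, using $\rho_i = 2$ for all $i$, so only finitely many ${\bf a}$ with $0 \le a_i \le 1$ need to be inspected, together with the argument that negative-degree pieces reduce to these). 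Reduced $0$-cohomology $\widetilde H^0$ measures exactly the failure of connectedness, so the whole problem becomes: for which ${\bf a}$ is the complex $\Delta_{\bf a}(I_\Delta^{(2)})$ disconnected (or empty)?

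The key computational step is to invoke Lemma \ref{MTrep}, which for $\ell = 2$ gives
\[
\Delta_{\bf a}(I_\Delta^{(2)}) = \big\langle\, F \in \mathcal F(\Delta) \;:\; \textstyle\sum_{i \notin F} a_i \le 1 \,\big\rangle .
\]
So I would run through the relevant ${\bf a}\in\{0,1\}^n$. For ${\bf a} = {\bf 0}$ we recover $\Delta$ itself, which is connected by the hypothesis $\diam\Delta^{(1)} \le 2$ (a graph of finite diameter is connected) — note $\dim\Delta\ge 1$ guarantees $\Delta$ has at least one edge, hence is not just isolated points. For ${\bf a} = {\bf e}_j$ a single unit vector, the condition $\sum_{i\notin F} a_i \le 1$ holds for every facet $F$, so again $\Delta_{{\bf e}_j}(I_\Delta^{(2)}) = \Delta$, connected. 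The interesting case is ${\bf a} = {\bf e}_j + {\bf e}_k$ with $j\ne k$: then the surviving facets are exactly those containing $j$ or $k$ (i.e.\ those $F$ with $\{j,k\}\not\subseteq [n]\setminus F$), so
\[
\Delta_{{\bf e}_j+{\bf e}_k}(I_\Delta^{(2)}) = \big\langle\, F\in\mathcal F(\Delta) : j\in F \text{ or } k\in F \,\big\rangle = \star_\Delta\{j\}\,\cup\,\star_\Delta\{k\}.
\]
This union is connected for all pairs $j,k$ if and only if, for every pair of vertices, $\star\{j\}$ and $\star\{k\}$ share a vertex, i.e.\ there is a vertex $m$ adjacent-or-equal to both $j$ and $k$ in $\Delta^{(1)}$ — which is precisely $\dist_{\Delta^{(1)}}(j,k)\le 2$, i.e.\ $\diam\Delta^{(1)}\le 2$. (One also checks the degenerate subcases: if $\{j,k\}$ is not an edge but $j,k$ lie in disjoint stars the union is disconnected; if some vertex lies in neither star, emptiness/nonconnectedness of the relevant piece is handled the same way. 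For ${\bf a}$ with more than two nonzero coordinates, or a coordinate equal to $1$ beyond these, the surviving subcomplex only shrinks further and the same diameter condition controls connectedness, or the degree lies outside the range where $H^1_\frm$ can be nonzero.)

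Assembling: $H^1_\frm(S/I_\Delta^{(2)}) = 0$ $\iff$ $\Delta_{\bf a}(I_\Delta^{(2)})$ is connected and nonempty for all the relevant ${\bf a}$ $\iff$ $\star_\Delta\{j\}\cup\star_\Delta\{k\}$ is connected for every pair $j,k$ $\iff$ $\diam\Delta^{(1)} \le 2$. The parenthetical equivalence $\depth S/I_\Delta^{(2)}\ge 2 \iff \depth (S/I_\Delta^{(2)})_\frm \ge 2$ follows from unmixedness of $I_\Delta^{(2)}$: depth $\ge 2$ is a statement about $H^0_\frm$ and $H^1_\frm$, both supported at $\frm$, so it suffices to read it off after localizing at $\frm$ (equivalently, completing). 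I expect the main obstacle to be the bookkeeping for the degree range in Takayama's formula — verifying carefully that it suffices to inspect ${\bf a}\in\{0,1\}^n$ and that no contribution is hidden in components with some $a_i\ge 2$ or with $G_{\bf a}\ne\emptyset$ — and, relatedly, handling the degenerate connectivity cases (empty subcomplexes, vertices outside both stars) cleanly, since $\widetilde H^0$ of the empty complex versus a one-point complex must be tracked with the right sign/normalization conventions.
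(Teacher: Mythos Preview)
Your approach is essentially the paper's own: translate $\depth\ge 2$ into vanishing of $H^1_\frm$, invoke Takayama's formula together with Lemma~\ref{MTrep}, reduce to checking connectedness of $\Delta_{\bf a}(I_\Delta^{(2)})$ for ${\bf a}\in\{0,1\}^n$, and identify the case $|\supp{\bf a}|=2$ with the diameter condition. Your identification $\Delta_{{\bf e}_j+{\bf e}_k}(I_\Delta^{(2)}) = \star_\Delta\{j\}\cup\star_\Delta\{k\}$ and the resulting equivalence with $\dist(j,k)\le 2$ are correct and match the paper.

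The genuine gap is your treatment of ${\bf a}$ with $|\supp{\bf a}|\ge 3$. You dismiss it with ``the surviving subcomplex only shrinks further and the same diameter condition controls connectedness, or the degree lies outside the range where $H^1_\frm$ can be nonzero.'' Neither clause works: these degrees are well within the Takayama range (they lie in $\{0,1\}^n$ with $\rho_i=2$), and a subcomplex of a connected complex can certainly be disconnected, so ``shrinks further'' proves nothing. The paper handles this case by a direct counting argument that does \emph{not} use the diameter hypothesis at all: if $F_1,F_2\in\mathcal F(\Delta_{\bf a})$, then each $F_i$ must contain at least $|\supp{\bf a}|-1$ elements of $\supp{\bf a}$ (from the constraint $\sum_{i\notin F}a_i\le 1$), whence
\[
\sharp(F_1\cap F_2)\;\ge\;\sharp\bigl((F_1\cap\supp{\bf a})\cap(F_2\cap\supp{\bf a})\bigr)\;\ge\;|\supp{\bf a}|-2\;\ge\;1,
\]
so any two facets of $\Delta_{\bf a}$ meet and $\Delta_{\bf a}$ is connected. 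You need to supply this (or an equivalent) argument; without it the implication $(2)\Rightarrow(1)$ is not established.
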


\begin{proof} 
Put $\Delta_{\bf a} := 
\langle F \in \mathcal{F}(\Delta) \,:\, \sum_{i \notin F} a_i \le 1 \rangle$.
\par \vspace{2mm} \par \noindent 
(1) $\Longrightarrow (2):$ 
For given $r,\,s \in V=[n]$ ($r < s)$, we show that $\dist(r,s) \le 2$ 
in $\Delta^{(1)}$. 
Put ${\bf a} = {\bf e}_r +  {\bf e}_s \in \mathbb{N}^n$.  
Then 
$\Delta_{\bf a}
=\langle F \in \mathcal{F}(\Delta) \,:\, 
r \in F \;\text{or} \; s \in F \rangle$.  
Since $\depth S/I_{\Delta}^{(2)} \ge 2$, 
we have that $\widetilde{H}_0(\Delta_{\bf a}) =0$ and thus 
$\Delta_{\bf a}$ is connected by Takayama's formula and 
Lemma \ref{MTrep}.
Hence there exists an $F \in \mathcal{F}(\Delta)$ such that $r,s \in F$ 
or there exist $F_r \in \mathcal{F}(\Delta)$ and 
$F_s \in \mathcal{F}(\Delta)$ such that $r \in F_r$, $s \in F_s$ and 
$F_r \cap F_s \ne \emptyset$.  
In any case, we get $\dist(r,s) \le 2$, as required. 
\par \vspace{2mm}
$(2) \Longrightarrow (1):$
Assume $\diam \Delta^{(1)} \le 2$. 
By Takayama's formula, it suffices to show that $\Delta_{\bf a}$ 
is connected for any ${\bf a} \in \{0,1\}^n$ 
with $\Delta_{\bf a} \ne \emptyset$; see also \cite{MiT2}.  
\begin{description}
\item[{\bf Case 1}] $\sharp(\supp {\bf a}) \le 1$.    
\end{description}
Then $\Delta_{\bf a}=\Delta$ is connected by assumption. 
\begin{description}
\item[{\bf Case 2}] $\sharp(\supp {\bf a}) = 2$.    
\end{description}
We may assume that $a_r = a_s=1$ for some $r < s$. 
Then 
\[
\Delta_{\bf a} = \langle F \in \mathcal{F}(\Delta) 
\,:\, r \in F \;\text{or} \;s \in F \rangle.
\] 
Since $\diam \Delta^{(1)} \le 2$, we have that $\{r,s\} \in \Delta$ or 
there exists a $t \in V$ such that $\{r,t\}$, $\{t,s\} \in \Delta$. 
In the first case, if we choose a facet $F \in \mathcal{F}(\Delta)$ which 
contains $\{r,s\}$, then $F \in \Delta_{\bf a}$ and $r,s \in F$. 
In the second case, if we choose facets $F_1$, $F_2$ such that 
$\{r,t\} \in F_1$ and $\{s,t\} \in F_2$. 
Then $\Delta_{\bf a}$ is connected 
because $F_1,\,F_2 \in \Delta_{\bf a}$. 
\begin{description}
\item[{\bf Case 3}] $\sharp(\supp {\bf a}) \ge 3$.    
\end{description}
We may assume that $\sharp(\mathcal{F}(\Delta_{\bf a})) \ge 2$. 
Let $F_1$, $F_2 \in \mathcal{F}(\Delta_{\bf a})$. 
By assumption, $\sharp(F_i \cap \supp({\bf a})) \ge \sharp(\supp({\bf a}))-1$
for each $i=1,2$. 
Then we get 
\[
\sharp(F_1 \cap F_2) \ge \sharp
\big(F_1 \cap \supp({\bf a})) \cap (F_2 \cap \supp({\bf a}))\big) 
\ge \sharp(\supp({\bf a})) -2 \ge 1. 
\]
Hence $\Delta_{\bf a}$ is connected.   
\end{proof}

\begin{cor}  \label{S2thm}
Let $\Delta$ be a pure simplicial complex. 
Then the following conditions are equivalent$:$
\begin{enumerate}
\item $S/I_{\Delta}^{(2)}$ satisfies $(S_2)$.
\item $\diam ((\link_{\Delta} F)^{(1)}) \le 2$ for any face $F \in \Delta$ with 
$\dim \link_{\Delta} F \ge 1$. 
\end{enumerate}
\end{cor}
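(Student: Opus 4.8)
The plan is to reduce Serre's condition $(S_2)$ for $S/I_\Delta^{(2)}$ to a statement about depth of localizations at prime ideals, and then to recognize each such localization as a second symbolic power of a Stanley--Reisner ideal of a link, so that Theorem \ref{Depth} applies. First I would recall that since $\Delta$ is pure and $I_\Delta$ is unmixed, the ring $S/I_\Delta^{(2)}$ is unmixed of dimension $d$; hence $(S_2)$ holds if and only if $\depth (S/I_\Delta^{(2)})_P \ge \min\{\dim (S/I_\Delta^{(2)})_P,\,2\}$ for every prime $P$. For primes with $\dim (S/I_\Delta^{(2)})_P \le 1$ there is nothing to prove, so the condition is equivalent to $\depth (S/I_\Delta^{(2)})_P \ge 2$ for all $P$ with $\dim (S/I_\Delta^{(2)})_P \ge 2$. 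By the graded/monomial structure it suffices to check this for the monomial primes $P_F = (x_i : i \notin F)$ associated to faces $F \in \Delta$ (together with $P=\frm$, which is the case $F = \emptyset$), since localizing first at a face and then at a further prime only iterates the same kind of computation.

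The key step is the localization formula: for a face $F \in \Delta$, localizing $S/I_\Delta^{(2)}$ at the multiplicative set generated by $\{x_i : i \in F\}$ (equivalently, passing to the relevant localization) gives, after inverting the vertices of $F$,
\[
 \big(S/I_\Delta^{(2)}\big)[x_F^{-1}] \;\cong\; \big(S'/I_{\link_\Delta F}^{(2)}\big)[x_i^{\pm 1} : i \in F],
\]
where $S'$ is the polynomial ring on the remaining variables; this is the symbolic-power analogue of the localization identity $I_\Delta S[x_F^{-1}] = (I_{\link_\Delta F}, x_{i_1},\dots,x_{i_k})S[x_F^{-1}]$ used in the proof of Lemma \ref{Vasconcelos}, and follows by intersecting the relevant $P_G^2$ over facets $G \supseteq F$. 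Since inverting a variable and adjoining a Laurent variable does not change depth modulo a regular sequence, $\depth (S/I_\Delta^{(2)})_{P_F} \ge 2$ is equivalent to $\depth S'/I_{\link_\Delta F}^{(2)} \ge 2$ at the corresponding maximal ideal. Now $\dim (S/I_\Delta^{(2)})_{P_F} \ge 2$ forces $\dim \link_\Delta F \ge 1$, and then Theorem \ref{Depth} applied to the complex $\link_\Delta F$ says exactly that $\depth S'/I_{\link_\Delta F}^{(2)} \ge 2$ if and only if $\diam((\link_\Delta F)^{(1)}) \le 2$. Running over all faces $F$ (including $F=\emptyset$, which gives the global condition $\diam \Delta^{(1)} \le 2$, subsumed in the statement) yields the equivalence of (1) and (2).

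The main obstacle I anticipate is making the localization isomorphism precise as $\mathbb{Z}^n$-graded rings and checking that the depth of the localization at a non-maximal prime $P_F$ is governed solely by the link at $F$ — in particular, that no ``new'' non-Cohen--Macaulay locus appears from primes that are not of the form $P_F$. This is handled by the standard fact that for a monomial ideal the local cohomology, and hence depth, at any prime $P$ is computed from the localization at the monomial prime $\sqrt{P}$ (which is some $P_F$), together with purity of $S/I_\Delta^{(2)}$ which rules out the degenerate low-dimensional primes. Once the localization identity is in hand, the rest is a direct invocation of Theorem \ref{Depth}, so the argument is short; the care lies entirely in the bookkeeping of which faces $F$ contribute and in verifying the symbolic-power localization formula.
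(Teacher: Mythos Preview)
Your approach is correct and matches the paper's: both reduce $(S_2)$ for $S/I_\Delta^{(2)}$ to the depth-$2$ criterion of Theorem~\ref{Depth} applied to links, via the localization identity $(S/I_\Delta^{(2)})[x_F^{-1}]\cong (S'/I_{\link_\Delta F}^{(2)})[x_i^{\pm1}:i\in F]$. The only difference is organizational. For $(2)\Rightarrow(1)$ the paper argues by induction on one vertex at a time: if $P\ne\frm$, pick a variable $x\notin P$ and use that $(S/I_\Delta^{(2)})_x$ already satisfies $(S_2)$ by the inductive hypothesis (since condition~(2) passes to $\link_\Delta\{x\}$); the case $P=\frm$ is Theorem~\ref{Depth} directly. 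This covers \emph{all} primes without a separate reduction to monomial primes. Your phrase ``the monomial prime $\sqrt{P}$'' is a slip---for a prime $P$ one has $\sqrt{P}=P$, which need not be monomial---and the honest justification for restricting to the $P_F$'s is either the standard $\mathbb{Z}^n$-graded reduction or precisely the paper's inductive step. Once that is patched, the two arguments coincide.
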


\begin{proof}
$(1) \Longrightarrow (2):$ 
Let $F$ be a face of $\Delta$ with $\dim \link_{\Delta} F \ge 1$. 
By assumption and localization, 
we obtain that $S'/I_{\link_{\Delta}(F)}^{(2)}$ satisfies $(S_2)$, 
where $S'$ is a polynomial ring which corresponds to $\Gamma = \link_{\Delta}(F)$.  
Then $\depth S'/I_\Gamma^{(2)} \ge 2$. 
It follows from Theorem \ref{Depth} that $\diam \Gamma^{(1)} \le 2$, as required. 
\par \vspace{2mm}
$(2) \Longrightarrow (1):$ 
The assumption (2) preserves under localization. 
Hence we may assume that $S/I_{\link_{\Delta} \{x\}}^{(2)}$ satisfies $(S_2)$.
This implies that $S/I_{\link_{\Delta} \{x\}}$ also satisfies $(S_2)$ by \cite{HTT}. 
Hence $(S/I_{\Delta}^{(2)})_x$ satisfies $(S_2)$ for every variable $x$. 
\par 
Let $P \in \Spec(S/I_{\Delta}^{(2)})$ with $\dim (S/I_{\Delta}^{(2)})_P \ge 2$. 
If $P \ne \frm$, then there exists a variable $x$ such that $x \notin P$. 
Then $\depth (S/I_{\Delta}^{(2)})_P \ge 2$ by the above argument. 
Otherwise, $P = \frm$. 
Since $\diam \Delta^{(1)} \le 2$ by assumption, we have that 
$\depth (S/I_{\Delta}^{(2)})_{\frm} \ge 2$ by Theorem \ref{Depth}. 
Therefore $S/I_{\Delta}^{(2)}$ satisfies $(S_2)$. 
\end{proof}

\par \vspace{2mm}
The next example shows that 
the $(S_2)$-ness of $I_{\Delta}^{(2)}$ does not necessarily imply its Cohen-Macaulayness.

\begin{exam}[{\bf The triangulation of the real projective plane}] \label{RealP}
Let $I=I_{\Delta}$ be the Stanley-Reisner ideal of the triangulation of 
the real projective plane $\mathbb{P}^2$.  
Then $I_{\Delta}$ is generated by the following monomials of degree $3$:  
\[
x_1x_2x_3,\,x_1x_2x_5,\,x_1x_3x_6,\,
x_1x_4x_5,\,x_1x_4x_6,\,x_2x_3x_4,\,
x_2x_4x_6,\,x_2x_5x_6,\,x_3x_4x_5,\,x_3x_5x_6.
\]
\par \vspace{2mm}
\begin{center}
\begin{picture}(400,80)
  \put(40, 30){$\Delta=$}
  \thicklines
  \put(150,75){\circle*{6}}  
  \put(100,50){\circle*{6}}  
  \put(100,14){\circle*{6}}  
  \put(150,-10){\circle*{6}}  
  \put(200,14){\circle*{6}}  
  \put(200,50){\circle*{6}}  
  \put(150,50){\circle*{6}}  
  \put(130,25){\circle*{6}}  
  \put(170,25){\circle*{6}}  
  \put(146,80){{\tiny $1$}}  
  \put(90,50){{\tiny $2$}}  
  \put(90,14){{\tiny $3$}}  
  \put(146,-20){{\tiny $1$}}  
  \put(205,14){{\tiny $2$}}  
  \put(205,50){{\tiny $3$}}  
  \put(154,54){{\tiny $4$}}  
  \put(127,30){{\tiny $5$}}  
  \put(171,30){{\tiny $6$}}  
  \put(147,74){\line(-2,-1){44}}  
  \put(153,74){\line(2,-1){44}}  
  \put(147,-9){\line(-2,1){44}}  
  \put(153,-9){\line(2,1){44}}  
  \put(100,50){\line(0,-1){36}}  
  \put(200,50){\line(0,-1){36}}  
  \put(150,50){\line(0,1){25}}  
  \put(147,50){\line(-1,0){47}}  
  \put(153,50){\line(1,0){47}}  
  \put(150,50){\line(-5,-6){20}}  
  \put(150,50){\line(5,-6){20}}  
  \put(130,25){\line(-6,5){27}}  
  \put(129,25){\line(-5,-2){26}}  
  \put(131,25){\line(1,-2){19}}  
  \put(130,25){\line(1,0){40}}  
  \put(169,25){\line(-1,-2){19}}  
  \put(171,25){\line(5,-2){26}}  
  \put(170,25){\line(6,5){27}}  
  \put(230,20){$\link_{\Delta}\{4\}=$}
    \thicklines
  \put(350,50){\circle*{6}}  
  \put(300,25){\circle*{6}}  
  \put(400,25){\circle*{6}}  
  \put(350,25){\circle*{2}}  
  \put(330,0){\circle*{6}}  
  \put(370,0){\circle*{6}}  
  \put(346,55){{\tiny $1$}}  
  \put(290,25){{\tiny $2$}}  
  \put(405,25){{\tiny $3$}}  
  \put(354,29){{\tiny $4$}}  
  \put(327,5){{\tiny $5$}}  
  \put(371,5){{\tiny $6$}}  
  \put(347,49){\line(-2,-1){44}}  
  \put(353,49){\line(2,-1){44}}  
  \put(330,0){\line(-6,5){27}}  
  \put(330,0){\line(1,0){40}}  
  \put(370,0){\line(6,5){27}}  
\end{picture}
\end{center}
\par \vspace{6mm}
Since 
$\widetilde{\chi}(\Delta) = -1 +f_0 - f_1 + f_2 = -1 + 6 - 15 +10 =0 \ne (-1)^2$,   
$K[\Delta]$ is \textit{not} Gorenstein for any field $K$. 
Moreover, Reisner proved that $K[\Delta]$ is  Cohen-Macaulay if and only if 
$\chara K \ne 2$. 
\par  
The link of every vertex is a pentagon, and $\Delta^{(1)}$ is the complete $6$-graph. 
Hence it follows from Corollary \ref{S2thm} that $S/I_{\Delta}^{(2)}$ 
has $(S_2)$.  
But it is \textit{not} Cohen-Macaulay; see  \cite[Example 2.8]{MiT2}.  
\par 
One can easily see that $x_1x_2x_3x_4x_5x_6 \in 
I_{\Delta}^{(2)} \setminus I_{\Delta}^2$.   
Hence $S/I_{\Delta}^2$ does not satisfy $(S_2)$. 
\end{exam}

\begin{quest} \label{S2-edge}
Let $I(G)$ be the edge ideal of a graph $G$. 
If $S/I(G)^{(2)}$ satisfies $(S_2)$,  
then is it Cohen-Macaulay?  
\end{quest}

\medskip
\section{When does $I^{(2)} = I^2$ hold}
\par 
In this section, we discuss when $I^{(2)} = I^2$ holds
for any squarefree monomial ideal $I$.
First we introduce the notion of special triangles.  

\begin{defn} \label{Hypergraph} 
Let $I$ be a squarefree monomial ideal of $S=K[x_1,\ldots,x_n]$.  
Let $G(I) = \{x^{H_1},\ldots,x^{H_\mu}\}$ be the minimal set of 
monomial generators, where $x^{H} = x_{i_1}\cdots x_{i_r}$ 
for $H=\{i_1,\ldots,i_r\}$.   
Then $\mathcal{H}(I)$ is called the \textit{associated hypergraph} of $I$ 
if the vertex set of $\mathcal{H}(I)$ is $V$ and 
the edge set is $\{H_1,\ldots,H_\mu\}$.   
\par
Then $\{i,j,k\}$ is called a \textit{special triangle} of $\mathcal{H}(I)$ if 
there exist $H_i,H_j,H_k \in \mathcal{H}(I)$ such that 
\[
H_i \cap \{i,j,k\} = \{j,k\},\qquad 
H_j \cap \{i,j,k\} = \{i,k\},\qquad 
H_k \cap \{i,j,k\} = \{i,j\}. 
\]
Then we say that \lq\lq $H_i,H_j,H_k$ \textit{make a special triangle}
 $\{i,j,k\}$''. 
\end{defn}

\par 
For instance, if $G(I)$ contains $x_1x_2L_1$, $x_2x_3L_2$, $x_3x_1L_3$ 
($L_1,L_2,L_3$ are monomials any of which is not divided by $x_1$,$x_2$ nor $x_3$),
then $\{1,2,3\}$ is a special triangle. 

\begin{remark}
A special cycle is considered in \cite{HHTZ}, 
and they prove that $I^{(\ell)}=I^{\ell}$ hold for any $\ell \ge 1$ if
there exists no special odd cycle in $\mathcal{H}(I)$. 
\end{remark}

\par \vspace{2mm}
The following is the main theorem in this section.  

\begin{thm} \label{SpTriangle}
Let $I$ be a squarefree monomial ideal. 
Then the following conditions are equivalent$:$
\begin{enumerate}
\item $I^{(2)} = I^2$ holds.
\item If there exist $\{H_1,H_2,H_3\} \subseteq \mathcal{H}(I)$ such that 
$H_1,H_2,H_3$ make a special triangle, 
then $x^{H_1 \cap H_2 \cap H_3} x^{H_1 \cup H_2 \cup H_3} \in I^2$. 
\end{enumerate}
\end{thm}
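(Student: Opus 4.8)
The plan is to characterize membership in $I^{(2)}$ monomial-by-monomial and compare it with membership in $I^2$. Since $I$ is a squarefree monomial ideal, $\Delta = \Delta(I)$ is a simplicial complex with $I_\Delta = I$ (here $I$ is already radical), and $I^{(2)} = \bigcap_{F \in \mathcal{F}(\Delta)} P_F^2$ by the formula in Section~1.2. Both $I^{(2)}$ and $I^2$ are monomial ideals, so the inclusion $I^{(2)} \subseteq I^2$ (the nontrivial one, since $I^2 \subseteq I^{(2)}$ always holds) is equivalent to: every monomial $x^{\bf b} \in I^{(2)}$ lies in $I^2$. First I would record the combinatorial meaning of $x^{\bf b} \in P_F^2$: writing $\bar F = [n] \setminus F$, this says $\sum_{i \in \bar F} b_i \ge 2$, i.e.\ either some $b_i \ge 2$ with $i \notin F$, or there are two distinct indices $i,j \notin F$ with $b_i, b_j \ge 1$. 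Then $x^{\bf b} \in I^{(2)}$ means this holds for every facet $F$. On the other side, $x^{\bf b} \in I^2$ means there exist (not necessarily distinct) generators $x^{H}, x^{H'} \in G(I)$ with $x^H x^{H'} \mid x^{\bf b}$; when the two are equal this just requires $2H \le {\bf b}$ componentwise.

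Next I would identify the obstruction. Suppose $x^{\bf b} \in I^{(2)} \setminus I^2$ is chosen minimal (say, of minimal degree, or minimal under divisibility). If ${\bf b}$ has a coordinate $b_i \ge 2$, one can try to ``factor out'' $x_i$ twice; more carefully, since $x^{\bf b}/x_i \notin I^{(2)}$ would contradict minimality only after checking that $x^{\bf b}/x_i$ still lies in $I$, the real point is to reduce to the case that ${\bf b}$ is itself squarefree, i.e.\ ${\bf b} = x^T$ for some subset $T \subseteq [n]$, and moreover that $T$ is ``tight'': $T \notin \Delta$ (so $x^T \in I$), but for every proper subset $T' \subsetneq T$ with $x^{T'} \in I$ we have $x^{T'} \cdot x^{T \setminus T'} \notin I^2$ forces... — this is where the special-triangle structure must emerge. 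The key combinatorial claim to establish is: if $x^T \in I^{(2)}$, $T$ squarefree and minimal-not-in-$I^2$, then $|T| = 3$, say $T = \{i,j,k\}$, and the three facets' worth of constraints force the existence of generators $H_i, H_j, H_k \in \mathcal{H}(I)$ making a special triangle on $\{i,j,k\}$ with $x^{H_1 \cap H_2 \cap H_3} x^{H_1 \cup H_2 \cup H_3} \mid x^T$, contradicting condition~(2) — equivalently, that every generator meeting $T$ meets it in exactly two of $\{i,j,k\}$, which is precisely the special triangle configuration.

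For the converse direction (that (1) implies (2)), I would argue contrapositively: if $H_1, H_2, H_3$ make a special triangle $\{i,j,k\}$ but $m := x^{H_1 \cap H_2 \cap H_3} x^{H_1 \cup H_2 \cup H_3} \notin I^2$, then I claim $m \in I^{(2)}$, giving $I^{(2)} \ne I^2$. To see $m \in I^{(2)}$, fix any facet $F$; I must check $\sum_{\ell \notin F} (\deg_{x_\ell} m) \ge 2$. Since $F$ is a face, it cannot contain $\{i,j,k\}$ (that set is the support of a nonface, being contained in none of the... — one checks $\{i,j,k\}$ is a minimal nonface or at least a nonface using that $H_i \cap \{i,j,k\} = \{j,k\}$ etc.), so at least one of $i,j,k$ is outside $F$; combining the two "copies" of each of $i,j,k$ that $m$ carries (one from $x^{H_1\cap H_2 \cap H_3}$-type contributions suitably interpreted, one from the union part) with the generator divisibilities should push the $\bar F$-degree of $m$ up to $\ge 2$ in every case. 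The model example $x_1 x_2 x_3 \cdot x_1 x_2 x_3 = x_1^2x_2^2x_3^2$ with $I = (x_1x_2, x_2x_3, x_3x_1)$ is the prototype to keep in mind and to generalize.

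The main obstacle I anticipate is the forward implication's reduction step: showing that a minimal monomial in $I^{(2)} \setminus I^2$ must have squarefree support of size exactly three and that the incidence pattern of generators on that support is forced to be a special triangle. Bookkeeping with the facet inequalities $\sum_{i \notin F} b_i \ge 2$ across all facets, and extracting from them a single "bad" triple, is delicate — one likely wants to use that if some $b_i \ge 2$ then $x^{\bf b}$ is divisible by $x_i^2$ times something already forced into $I$, pumping it into $I^2$ unless the remaining part obstructs, and iterate. I would structure this as: (a) reduce to squarefree $T$; (b) among minimal such $T$, show $|T| \le 3$ by a pigeonhole on which pairs of coordinates "cover" each facet; (c) handle $|T| \le 2$ directly (impossible, as $x^T \in I^{(2)}$ with $|T|\le 2$ forces $x^T \in I^2$ trivially), leaving $|T| = 3$; (d) decode the facet conditions on $T = \{i,j,k\}$ into the special-triangle generators. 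Steps (b) and (d) are the crux.
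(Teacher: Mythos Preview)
Your plan for $(2) \Rightarrow (1)$ has a genuine gap: the reduction to a squarefree minimal counterexample does not work, and the claim $|T|=3$ fails with it. Take $I = (x_1 x_2 x_4,\; x_2 x_3 x_4,\; x_1 x_3 x_4)$. Here $H_1=\{2,3,4\}$, $H_2=\{1,3,4\}$, $H_3=\{1,2,4\}$ make a special triangle on $\{1,2,3\}$ with $H_1\cap H_2\cap H_3=\{4\}$. The minimal primes of $I$ are $(x_4)$, $(x_1,x_2)$, $(x_2,x_3)$, $(x_1,x_3)$, and the unique minimal monomial in $I^{(2)}\setminus I^2$ is $x_1x_2x_3x_4^{\,2}=x^{H_1\cap H_2\cap H_3}\,x^{H_1\cup H_2\cup H_3}$. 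This is not squarefree; its squarefree part $x_1x_2x_3x_4$ is not even in $I^{(2)}$ because of the height-one prime $(x_4)$. More generally, whenever $H_1\cap H_2\cap H_3\neq\emptyset$ the obstruction monomial in condition (2) is not squarefree, so it can never divide a squarefree $x^T$ as your step (d) requires; and the support here has size $4$, not $3$. Thus steps (a), (b), (d) all break down outside the pure edge-ideal setting your intuition is modeled on.

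The paper avoids analyzing minimal monomials of $I^{(2)}\setminus I^2$ altogether. It argues by induction on $\dim S/I$: condition (2) is stable under inverting a variable, so one may assume $(I^{(2)})_x=(I^2)_x$ for every $x$, after which $I^{(2)}=I^2$ is equivalent to $\frm\notin\Ass(S/I^2)$. A short lemma reduces this to the inclusion $xI\cap(I^2:x)\subseteq I^2$ for each variable $x$. The key computation is then: given $M\in x_1 I\cap(I^2:x_1)\setminus I^2$, write $M=N_1L'$ with $N_1\in G(I)$, $x_1\mid L'$, and $x_1M=N_2N_3L$ with $N_2,N_3\in G(I)$; one shows the corresponding $H_1,H_2,H_3$ form a special triangle and that $x^{H_1\cap H_2\cap H_3}\,x^{H_1\cup H_2\cup H_3}$ divides $M$, so condition (2) forces $M\in I^2$. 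Here $M$ is never assumed squarefree and $\{1,2,3\}$ is typically a small fragment of its support.

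A minor correction on $(1)\Rightarrow(2)$: each of $i,j,k$ lies in exactly two of the $H_\ell$, so $x_i,x_j,x_k$ appear in $m$ with exponent $1$, not two; and $\{i,j,k\}$ need not be a nonface (e.g.\ $H_1=\{j,k,a\}$, $H_2=\{i,k,b\}$, $H_3=\{i,j,c\}$, where $(x_a,x_b,x_c)$ is a minimal prime). The argument you want uses only that each $H_\ell$ is a nonface: for any facet $F$ pick $a_\ell\in H_\ell\setminus F$; either some $a_\ell$ lies in $H_1\cap H_2\cap H_3$ (degree $2$ in $m$), or the $a_\ell$ cannot all coincide and give two distinct elements of $(H_1\cup H_2\cup H_3)\setminus F$.
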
 

\begin{remark}
If there exist no special triangles, then we have $I^{(2)}=I^2$. 
The converse is not true. 
\end{remark}

\par 
The following criterion is well known; see \cite{RTY}.  

\begin{cor} \label{Triangle}
Let $I(G)$ denote the edge ideal of a graph $G$. 
Then $I(G)^{(2)} = I(G)^2$ holds if and only if 
$G$ has no triangles $($the cycles of length $3$$)$. 
\end{cor}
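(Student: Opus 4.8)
The plan is to derive this directly from Theorem \ref{SpTriangle}, so the first step is to identify what a special triangle of $\mathcal{H}(I(G))$ is. Since every minimal generator of $I(G)$ is a squarefree monomial of degree $2$, every edge $H \in \mathcal{H}(I(G))$ has $\sharp H = 2$. Hence the defining condition $H_i \cap \{i,j,k\} = \{j,k\}$ forces $H_i = \{j,k\}$ — there is simply no room for a further vertex — and likewise $H_j = \{i,k\}$, $H_k = \{i,j\}$. Therefore $\{i,j,k\}$ is a special triangle of $\mathcal{H}(I(G))$ if and only if all three of $\{i,j\},\{j,k\},\{i,k\}$ are edges of $G$, i.e. if and only if $\{i,j,k\}$ spans a $3$-cycle of $G$.

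Next I would evaluate the monomial appearing in condition (2) of Theorem \ref{SpTriangle} for such a triangle. With $H_1 = \{i,j\}$, $H_2 = \{j,k\}$, $H_3 = \{i,k\}$ we have $H_1 \cap H_2 \cap H_3 = \emptyset$ and $H_1 \cup H_2 \cup H_3 = \{i,j,k\}$, so $x^{H_1 \cap H_2 \cap H_3}\, x^{H_1 \cup H_2 \cup H_3} = x_i x_j x_k$, a monomial of degree $3$.

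The final step is the numerical observation that $I(G)^2$ is generated by products of two edge-monomials and hence contains no nonzero element of degree less than $4$; in particular $x_i x_j x_k \notin I(G)^2$. Consequently, if $G$ has a triangle then condition (2) of Theorem \ref{SpTriangle} fails, so $I(G)^{(2)} \ne I(G)^2$; and if $G$ has no triangle then condition (2) is vacuously satisfied, so $I(G)^{(2)} = I(G)^2$. This is exactly the stated equivalence. The only point requiring (a very short) care is the identification in the first step, namely that the degree-$2$ constraint on the generators leaves no freedom, so ``special triangle of $\mathcal{H}(I(G))$'' collapses to ``triangle of $G$''; beyond that the argument is immediate, and I do not anticipate any genuine obstacle.
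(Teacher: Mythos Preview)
Your proposal is correct and is precisely the intended derivation: the paper states this result as an immediate corollary of Theorem~\ref{SpTriangle} without writing out a proof, and your argument---identifying special triangles of $\mathcal{H}(I(G))$ with triangles of $G$ via the degree-$2$ constraint, then observing that the associated monomial $x_ix_jx_k$ has degree~$3$ and hence lies outside $I(G)^2$---is exactly how one unpacks that implication.
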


\par
In what follows, we prove the above theorem.  
First we prove the following lemma. 

\begin{lemma} \label{SpMono}
Suppose that the condition $(2)$ in Theorem $\ref{SpTriangle}$ holds. 
Then $xI \cap (I^2 \colon x) \subseteq I^2$ holds for every $x \in V$.  
\end{lemma}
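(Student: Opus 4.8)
The plan is to work with monomials, since $I$ and all relevant ideals are monomial. Suppose $m$ is a monomial in $xI \cap (I^2 \colon x)$; then $m = x \cdot x^A$ for some generator-divisible monomial $x^A \in I$ (so $x^{H_i} \mid x^A$ for some $H_i \in \mathcal{H}(I)$), and $x \cdot m = x^2 x^A \in I^2$. I want to conclude $m \in I^2$. The key observation is that $x^2 x^A \in I^2$ means there are generators $x^{H_j}, x^{H_k} \in \mathcal{H}(I)$ with $x^{H_j} x^{H_k} \mid x^2 x^A$. If neither $H_j$ nor $H_k$ contains $x$ with the full multiplicity available, or if one of them together with $x^{H_i}$ already divides $m$, we are done directly. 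So the crux is the bad case: the variable $x$ is "used up" by the pair $(H_j, H_k)$ in a way that is not available in $m = x \cdot x^A$ alone.

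First I would set up notation carefully: write the relevant exponents and reduce to the situation where $x^{H_i} \mid x^A$, $x^{H_j} x^{H_k} \mid x^2 x^A$, and argue that without loss of generality $x \in H_j$ and $x \in H_k$ (otherwise at most one factor of $x$ is consumed and $x^{H_j} x^{H_k} \mid x x^A = m$, giving $m \in I^2$ immediately; similarly if $x \notin H_j$ say, then $x^{H_j} \mid x^A$ and we only need $x^{H_k} \mid x m / x^{H_j}$, which follows). Also I may assume $x \notin H_i$, since if $x \in H_i$ then $x^{H_i} x^{H_i}$ or a suitable pair divides $m$ easily. The remaining hard case is: $x \notin H_i$, $x \in H_j \cap H_k$, $x^{H_i} \mid x^A$, and $x^{H_j} x^{H_k} \mid x^2 x^A$ but $x^{H_j} x^{H_k} \nmid x x^A$.

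The main obstacle — and the heart of the lemma — is to manufacture a special triangle from this configuration so that hypothesis (2) can be invoked. The idea: pick an index $k_0 \in H_i$ (such exists) and consider the three sets $H_i$ (which omits $x$ but contains some witness not in $H_j \cup H_k$ near $x$), $H_j$, $H_k$. One wants to find a triple $\{p, q, r\}$ of vertices such that $H_i, H_j, H_k$ restrict to the "triangle pattern" $\{q,r\}, \{p,r\}, \{p,q\}$ on it. The natural candidates are: $r = x$ (in $H_j \cap H_k$, not in $H_i$), and $p, q$ chosen from the symmetric differences forced by the divisibility failure $x^{H_j} x^{H_k} \nmid x x^A$ — this failure pinpoints a variable $y$ appearing in both $H_j$ and $H_k$ but with insufficient multiplicity in $x^A$, forcing (via $x^{H_i} \mid x^A$) some structure. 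After extracting such $\{p,q,r\}$, hypothesis (2) gives $x^{H_i \cap H_j \cap H_k} x^{H_i \cup H_j \cup H_k} \in I^2$, and the final step is a monomial-divisibility check: this special-triangle monomial divides $m = x x^A$, so $m \in I^2$.

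I expect the bookkeeping in the last implication (that the special-triangle monomial $x^{H_i \cap H_j \cap H_k} x^{H_i \cup H_j \cup H_k}$ divides $x x^A$) to be the delicate routine part: one compares, variable by variable, the exponent in $x x^A$ (which is $\ge \max(\mathrm{ord}_{H_i}, \ldots)$ from the generators dividing it, plus the extra $x$) against $\mathrm{ord}_{H_i \cap H_j \cap H_k} + \mathrm{ord}_{H_i \cup H_j \cup H_k}$, and uses $x^{H_i} \mid x^A$ plus $x^{H_j} x^{H_k} \mid x^2 x^A$ to bound each term. The genuinely nontrivial conceptual step, though, is recognizing that the failure of $x^{H_j} x^{H_k}$ to divide $m$ already forces enough overlap among $H_i, H_j, H_k$ to produce a special triangle — everything else is a monomial inequality.
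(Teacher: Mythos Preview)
Your outline is exactly the paper's approach: write $M=x_1\cdot(\text{something in }I)=N_1L'$ with $x_1\mid L'$, write $x_1M=N_2N_3L$, reduce to $x_1\in H_2\cap H_3$ and $x_1\notin H_1$, extract a special triangle, and finish by showing $x^{H_1\cap H_2\cap H_3}x^{H_1\cup H_2\cup H_3}$ divides $M$. Two of your intermediate justifications, however, point in the wrong direction and would not close as written.

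First, the reduction to $x\notin H_i$: your reason (``then $x^{H_i}x^{H_i}$ or a suitable pair divides $m$ easily'') is false in general, since for $y\in H_i\setminus\{x\}$ the exponent of $y$ in $m$ need only be $1$. The correct argument is a degree count on $x$: since $M\notin I^2$ we have $x_1\nmid L$, and as $N_2,N_3$ are squarefree this forces the $x_1$-degree of $x_1M=N_2N_3L$ to be at most $2$; hence $x_1$ appears in $M$ with exponent $1$, and since $x_1\mid L'$ we get $x_1\nmid N_1$. Second, and more seriously, your mechanism for locating the other two triangle vertices is off: the failure you cite would only reproduce the vertex $x$ itself (the unique variable in $H_j\cap H_k$ blocking divisibility). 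The actual source of $p$ and $q$ is the observation that $L'=M/N_1\notin I$ (otherwise $M\in I^2$) together with $\gcd(N_2,N_3)\mid L'$; this forces some variable $x_2\mid N_3/\gcd(N_2,N_3)$ with $x_2\nmid L'$, hence $x_2\mid N_1$, so $x_2\in H_1\cap H_3\setminus H_2$, and symmetrically $x_3\in H_1\cap H_2\setminus H_3$. Those, together with $x_1$, form the special triangle. Once you have this, your final divisibility check goes through exactly as you sketched.
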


\begin{proof}
Suppose that there exist a variable $x_1$ and a monomial $M$ such that 
$M \in x_1 I \cap (I^2 \colon x) \setminus I^2$. 
As $x_1M \in I^2$, we can take $N_2$, $N_3 \in G(I)$ and a monomial $L$ such that 
\begin{eqnarray} \label{eq1}
x_1M = N_2N_3L. 
\end{eqnarray} 
On the other hand, as $M \in x_1I$, we can choose $N_1 \in G(I)$ 
and a monomial $L'$ such that 
\begin{eqnarray} \label{eq2}
M=N_1L' \quad \text{and} \quad  x_1 \,|\,L'. 
\end{eqnarray}
\begin{description}
\item[Claim 1] $x_1\,|\,N_2$, $x_1\,|\,N_3$ but $x_1 \,\not |\, N_1$. 
\end{description}
\par
As $M \notin I^2$, $x_1$ does not divide $L$. 
By Eqs.(\ref{eq1}),(\ref{eq2}), $N_2N_3L$ is divided by $x_1^2$. 
Hence $x_1$ divides both $N_2$ and $N_3$ because $N_i$ 
is a squarefree monomial for $i=2,3$.  
By a similar reason, we have that $N_1$ is not divided by $x_1$. 

\begin{description}
\item[Claim 2] $N_2\ne N_3$ and $\gcd(N_2,N_3) \,|\, L'$.  
\end{description}
\par
If $N_2=N_3$, then $x_1N_1L'=N_3^2L$ is divided by $x_1N_1$ 
and thus $N_3L$ is divided by $x_1N_1$. 
Then $M=N_1N_2(N_3L/x_1N_1) \in I^2$. This is a contradiction. 
Hence $N_2 \ne N_3$. 
\par
Since $x_1N_1L'=N_2N_3L$ is divided by $\gcd(N_2,N_3)^2$, $L'$ is  
divided by $\gcd(N_2,N_3)$ because $x_1N_1$ is squarefree. 

\begin{description}
\item[Claim 3] There exist variables $x_2$, $x_3$  such that
\[
x_2 \,\big|\, \frac{N_3}{\gcd(N_2,N_3)},\quad
x_3 \,\big|\, \frac{N_2}{\gcd(N_2,N_3)}, \quad x_2,x_3 \,|\,N_1
\]
\end{description}
\par
Note that any variable which divides $N_i$ 
for $i=2,3$ is a factor of $N_1$ or $L'$. 
Since $L' \notin I$, $L'/\gcd(N_2,N_3)$ is not divided by $N_3/\gcd(N_2,N_3)$. 
Thus there exists a variable $x_2$ such that 
$x_2\,|\, N_3 /\gcd(N_2,N_3)$ and $x_2\,|\,N_1$. 
The other statement follows from a similar argument.    
\par \vspace{2mm}
Take $H_i \in \mathcal{H}(I)$ such that $x^{H_i} = N_i$ for each $i=1,2,3$. 

\begin{description}
\item[Claim 4] $H_1,H_2,H_3$ make a special triangle $\{1,2,3\}$.   
\end{description}
\par
The assertion immediately follows from Claim 1 and Claim 3. 
By the Claim 4, we get a contradiction. 

\par\vspace{2mm}
By assumption, we get 
\[
 \gcd(N_1,N_2,N_3)\sqrt{N_1N_2N_3}
= x^{H_1 \cap H_2 \cap H_3} \cdot x^{H_1 \cup H_2 \cup H_3} \in I^2,
\]
where $\sqrt{N}=x_{i_1}\cdots x_{i_r}$ for a monomial 
$N=x_{i_1}^{a_{i_1}}\cdots x_{i_r}^{a_{i_r}}$ $(a_{i_j}>0)$. 
Since $N_1$ divides $N_2N_3L$ and $x_1 \,|\,N_2,\;N_3$, 
we have 
\begin{equation} \label{eq:sqrt}
\sqrt{N_1N_2N_3}\;\big|\; \frac{N_2N_3L}{x_1}=M.
\end{equation} 
On the other hand, since $x_1 \not | \gcd(N_1,N_2,N_3)$, we have 
\begin{equation} \label{eq:gcd}
\gcd(N_1,N_2,N_3)^2 \;\big|\;\frac{N_2N_3}{x_1} \;\big|\;M.  
\end{equation}
Hence Eqs. (\ref{eq:sqrt}), (\ref{eq:gcd}) imply 
\[
\gcd(N_1,N_2,N_3)\sqrt{N_1N_2N_3}\;\big|\; M. 
\]  
Therefore $M \in I^2$, which contradicts the choice of $M$.  
\end{proof}

\par
Now suppose that $I^{(2)}_{x}=I^2_{x}$ holds for every vertex $x \in V$.
Then $I^{(2)} = I^2$ if and only if $\frm \notin \Ass(S/I^2)$. 
Hence the following lemma is useful when we use an induction. 

\begin{lemma}[{\rm See the proof of \cite[Theorem 5.9]{SVV}}] \label{SVVproof}
Let $I$ be a squarefree monomial ideal of $S$ with $\dim S/I \ge 1$. 
Now suppose that $xI \cap (I^2 \colon x)\subseteq I^2$ for every variable $x$. 
Then $\frm \notin \Ass_S(S/I^2)$. 
\end{lemma}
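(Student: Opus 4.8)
The plan is to argue by contradiction, following the strategy behind \cite[Theorem 5.9]{SVV}. Since $I^2$ is a monomial ideal, its associated primes are monomial primes realized by colon ideals of monomials; so if $\frm \in \Ass_S(S/I^2)$ there is a monomial $M \notin I^2$ with $(I^2 \colon M) = \frm$, i.e.\ $x_i M \in I^2$ for every variable $x_i$. The target is to exhibit a single variable $x$ with $M \in xI \cap (I^2 \colon x)$, since the hypothesis then forces $M \in I^2$, a contradiction. Note that $M \in (I^2 \colon x)$ is automatic for every $x$ by the choice of $M$, so the real task is to find one variable $x$ with $x \mid M$ and $M/x \in I$.

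Here is where I would use $\dim S/I \ge 1$: this forces $I \subsetneq \frm$, hence some variable, say $x_1$, does not belong to $I$, equivalently $x_1 \notin G(I)$. Applying $x_1 M \in I^2$, write $x_1 M = N_1 N_2 L$ with $N_1, N_2 \in G(I)$ and $L$ a monomial. If $x_1$ divides neither $N_1$ nor $N_2$, then $x_1 \mid L$ and $M = N_1 N_2 (L/x_1) \in I^2$, a contradiction; so, after relabeling, $x_1 \mid N_1$, and since $N_1$ is squarefree, cancelling one factor of $x_1$ gives $M = (N_1/x_1)\,N_2\,L$. In particular $N_2 \mid M$, so $M \in I$; write $M = N_2 L''$ with $L'' = (N_1/x_1)L$. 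If $L'' \ne 1$, choose a variable $y \mid L''$: then $M/y = N_2(L''/y) \in I$, so $M \in yI$, while $yM \in I^2$ gives $M \in (I^2 \colon y)$, and the hypothesis yields $M \in I^2$, a contradiction. If $L'' = 1$, then $N_1/x_1 = 1$ and $L = 1$, so $N_1 = x_1 \in G(I)$, contradicting the choice of $x_1$. Either branch is impossible, so $\frm \notin \Ass_S(S/I^2)$.

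The only delicate point is the case bookkeeping: one must notice that the branch $L'' = 1$ is precisely what would place the chosen variable into $G(I)$, which is why the hypothesis $\dim S/I \ge 1$ (i.e.\ $I \ne \frm$) cannot be dropped — for $I = \frm$ one has $\frm \in \Ass_S(S/\frm^2)$ and the statement fails. Beyond that, everything is elementary divisibility of monomials, using only that minimal generators of a squarefree monomial ideal are squarefree; no homological input is needed once the colon-ideal reformulation is in place.
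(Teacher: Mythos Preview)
Your argument is correct. The overall strategy matches the paper's --- assume $(I^2:M)=\frm$ for a monomial $M\notin I^2$ and manufacture a variable $x$ with $M\in xI\cap(I^2:x)$ --- but the tactics differ. The paper first observes $\frm M\subseteq I^2\subseteq I$ and $\depth S/I>0$ to get $M\in I$, writes $M=(x_1\cdots x_k)L$ with $x_1\cdots x_k\in G(I)$, and then uses $x_kM\in I^2$ together with squarefreeness of the generators to show $M/x_1=x_2\cdots x_kL\in I$; the hypothesis is applied with the variable $x_1$ taken from the chosen generator. You instead fix at the outset a variable $x_1\notin I$ (using $I\ne\frm$), factor $x_1M=N_1N_2L$ to extract a generator $N_2\mid M$ (so $M\in I$ drops out of the factorization rather than from a depth argument), and apply the hypothesis to any variable $y$ dividing the cofactor $M/N_2$, handling the boundary case $M=N_2$ by noting it forces $x_1\in G(I)$. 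Your route trades the paper's implicit lemma ``$P\in I^2$ and $x\mid P$ imply $P/x\in I$'' for an extra case split; both are equally elementary and of comparable length.
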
 

\begin{proof}
Since $I^2$ and $\frm$ are monomial ideals, it suffices to show 
$I^2 \colon M \ne \frm$  for every variable $x$ and any monomial $M$.  
\par
Now suppose that $I^2 \colon M = \frm$   
for some monomial $M \notin I^2$. 
Since $\frm M \subseteq I^2 \subseteq I$ and $\depth S/I > 0$, we have 
$M \in I$. 
So we may assume that $M=x_1\cdots x_k L$, 
where $N=x_1 \cdots x_k \in G(I)$ and $L$ is a monomial. 
By assumption, $x_kM = x_1(x_2 \cdots x_{k-1}x_k^2 L) \in I^2$. 
Since $I$ is generated by squarefree monomials, we then have 
$x_2\cdots x_{k-1}x_k^2L \in I$ and hence 
$x_2\cdots x_{k-1}x_kL \in I$. 
Hence $M \in x_1I \cap (I^2 \colon x_1) \subseteq I^2$. 
This is a contradiction. 
\end{proof}

\begin{proof}[Proof of Theorem $\ref{SpTriangle}$]
First we show $(2) \Longrightarrow (1)$. 
Suppose (2). 
Since this condition preserves under localization, we may assume that 
$(I^{(2)})_{x}=(I^{2})_{x}$ for any variable $x$ by an induction on $\dim S/I$. 
By the above two lemmata, we have $\frm \notin \Ass_S(S/I^2)$.
Hence $I^{(2)}=I^2$, as required.  
\par \vspace{2mm}
Next we show $(1) \Longrightarrow (2)$. 
Suppose that 
there exists a subset $\{H_1,H_2,H_3\} \subseteq \mathcal{H}(I)$ such that 
$H_1,H_2,H_3$ make a special triangle and 
$x^{H_1\cap H_2 \cap H_3} x^{H_1 \cup H_2 \cup H_3} \notin I^2$. 
Then it suffices to show $I^2 \subsetneq I^{(2)}$. 
\par
Put $H= H_1 \cup H_2 \cup H_3$. 
Let $I_H$ be the squarefree monomial ideal of 
$K[x\,:\,x \in V \setminus H]$ such that 
$I_H S + (x \in V \setminus H)=I+(x \in V \setminus H)$.
Let $P$ be any minimal prime ideal of $I_H$. 
If $\height P =1$, then there exists a vertex $j \in H_1 \cap H_2 \cap H_3$ such that 
$P=(x_j)$. Then $M:=x^{H_1 \cap H_2 \cap H_3} x^{H} \in (x_j^2)=P^2$. 
If $\height P\ge 2$, then $P$ contains two variables $x_i,x_j$ with $i,j \in H$. 
Then $x^H \in P^2$ and hence $M \in P^2$. 
Therefore $M \in I_H^{(2)}$ but $M \notin I_H^2$ by the assumption that 
$M \notin I^2$. 
\end{proof}

\medskip
Suppose $U \cap V = \emptyset$. 
Let $\Gamma$ (resp. $\Lambda$) be a simplicial complex on $U$ (resp. $V$). 
Then the \textit{simplicial join} of $\Gamma$ and $\Lambda$, 
denoted by $\Gamma * \Lambda$, is defined by  
$\Gamma * \Lambda = \{F \cup G \,:\, F \in \Delta,\; G \in \Lambda\}$.  
It is a simplicial complex on $U \cup V$. 
\par 
The following corollary 
is probably well-known (and hence so is Corollary \ref{Disjoint}), 
but we give a proof as an application of Theorem \ref{SpTriangle}.

\begin{cor} \label{Joincor}
Let $\Gamma$ be a simplicial complex on $U$ 
and $\Lambda$ a simplicial complex on $V$. 
Let $\Delta = \Gamma * \Lambda$ denote 
the simplicial join of $\Gamma$ and $\Lambda$. 
Then $\Delta$ is a simplicial complex on $W =U \coprod V$. 
Put $R=K[U]$, $S=K[V]$ and $T=R \otimes_K S \cong K[W]$. 
Then$:$
\begin{enumerate}
\item $I_{\Delta}^{(2)}=I_{\Delta}^2$ if and only if 
$I_{\Gamma}^{(2)}=I_{\Gamma}^2$ and $I_{\Lambda}^{(2)}=I_{\Lambda}^2$. 
\item $T/I_{\Delta}^{2}$ is Cohen--Macaulay if and only if   
so do $R/I_{\Gamma}^{2}$ and $S/I_{\Lambda}^{2}$.  
\end{enumerate}
\end{cor}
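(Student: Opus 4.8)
The plan is to reduce both statements to the Stanley--Reisner ideals of $\Gamma$ and $\Lambda$ by exploiting the basic structure of a simplicial join. First I would record the key identity: if $I_\Gamma \subseteq R = K[U]$ and $I_\Lambda \subseteq S = K[V]$ are the Stanley--Reisner ideals, then $I_\Delta = I_\Gamma T + I_\Lambda T$ in $T = R \otimes_K S$, since a squarefree monomial $x_W$ (with $W = W_U \cup W_V$, $W_U \subseteq U$, $W_V \subseteq V$) is a nonface of $\Gamma * \Lambda$ exactly when $W_U \notin \Gamma$ or $W_V \notin \Lambda$. Consequently the minimal generators of $I_\Delta$ are exactly $G(I_\Gamma) \cup G(I_\Lambda)$, viewed in $T$, and the two generator sets involve disjoint variables. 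This makes $T/I_\Delta \cong (R/I_\Gamma) \otimes_K (S/I_\Lambda)$, and more importantly $\mathcal{H}(I_\Delta)$ is the disjoint union of $\mathcal{H}(I_\Gamma)$ and $\mathcal{H}(I_\Lambda)$ as hypergraphs on disjoint vertex sets.

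For part (1) I would apply Theorem \ref{SpTriangle}. Since the edge sets of $\mathcal{H}(I_\Delta)$ split into two groups supported on disjoint vertex sets $U$ and $V$, any special triangle $\{i,j,k\}$ of $\mathcal{H}(I_\Delta)$ must have all three of its defining generators $H_i, H_j, H_k$ lying entirely in $\mathcal{H}(I_\Gamma)$ or entirely in $\mathcal{H}(I_\Lambda)$: indeed the condition $H_i \cap \{i,j,k\} = \{j,k\}$ etc. forces $\{i,j,k\}$ to meet each of the three edges, so if one edge is an $I_\Gamma$-generator and another an $I_\Lambda$-generator then $\{i,j,k\}$ would have to contain vertices from both $U$ and $V$ in a way incompatible with the three intersection conditions. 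Hence the special triangles of $\mathcal{H}(I_\Delta)$ are precisely those of $\mathcal{H}(I_\Gamma)$ together with those of $\mathcal{H}(I_\Lambda)$, and for such a triangle coming from $\Gamma$ the monomial $x^{H_1 \cap H_2 \cap H_3} x^{H_1 \cup H_2 \cup H_3}$ lies in $I_\Gamma^2 \subseteq T$ iff it lies in $I_\Delta^2$ (again because the relevant variables are all in $U$ and $I_\Delta^2 \cap R = I_\Gamma^2$, a consequence of the disjoint-variable generation). Thus condition (2) of Theorem \ref{SpTriangle} holds for $I_\Delta$ iff it holds for both $I_\Gamma$ and $I_\Lambda$, which is the claim.

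For part (2) I would use the standard fact that for finitely generated graded algebras $A = R/I_\Gamma^2$ and $B = S/I_\Lambda^2$ over $K$, the tensor product $A \otimes_K B$ is Cohen--Macaulay iff both $A$ and $B$ are; here $T/I_\Delta^2 = T/(I_\Gamma T + I_\Lambda T)^2$ is not literally $A \otimes_K B$ because $(I_\Gamma T + I_\Lambda T)^2 \ne I_\Gamma^2 T + I_\Lambda^2 T$ in general — the cross terms $I_\Gamma I_\Lambda T$ appear. So the cleaner route is: by \cite{HTT} (or the fact that $I_\Delta = \sqrt{I_\Delta^2}$), if $T/I_\Delta^2$ is Cohen--Macaulay then $T/I_\Delta = A' \otimes_K B'$ is Cohen--Macaulay where $A' = R/I_\Gamma$, $B' = S/I_\Lambda$; and I can pass from $T/I_\Delta^2$ to $R/I_\Gamma^2$ by localizing, exactly as in the proof of Lemma \ref{Vasconcelos}: choosing a face $F$ of $\Lambda$ (a vertex suffices) and localizing at $x_F$ turns $I_\Delta$ into $I_\Gamma$ plus some variables, so $I_\Gamma^2$ plus a regular sequence is a localization of $I_\Delta^2$, giving Cohen--Macaulayness of $I_\Gamma^2$. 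For the converse direction, Cohen--Macaulayness of $R/I_\Gamma^2$ and $S/I_\Lambda^2$ implies $I_\Gamma^{(2)} = I_\Gamma^2$, $I_\Lambda^{(2)} = I_\Lambda^2$ and the Cohen--Macaulayness of the symbolic squares; then I would show $I_\Delta^{(2)} = I_\Delta^2$ via part (1), and establish Cohen--Macaulayness of $T/I_\Delta^{(2)}$ using the decomposition $I_\Delta^{(2)} = \bigcap_{F \in \mathcal F(\Gamma), G \in \mathcal F(\Lambda)} (P_F + P_G)^2$ together with a Künneth/local-cohomology computation for the join. The main obstacle is exactly this last point — controlling $T/I_\Delta^{(2)}$ (equivalently $T/I_\Delta^2$) directly, since the square of a sum of ideals in disjoint variables is not the sum of squares; I expect to handle it by a depth/local-cohomology argument (Takayama's formula applied to the join, or an explicit Mayer--Vietoris / Künneth decomposition of $H^i_{\frak m}(T/I_\Delta^2)$ in terms of those of $R/I_\Gamma^2$ and $S/I_\Lambda^2$), which is the step I would write out most carefully.
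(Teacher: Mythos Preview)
Your treatment of part (1) coincides with the paper's: it also observes that $I_\Delta = I_\Gamma T + I_\Lambda T$ with $G(I_\Delta) = G(I_\Gamma) \sqcup G(I_\Lambda)$ on disjoint vertex sets, and then invokes Theorem~\ref{SpTriangle}. Your argument that every special triangle of $\mathcal H(I_\Delta)$ lies entirely in one factor is exactly the content hidden behind the paper's one-line ``immediately follows.''

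For part (2) the paper takes a much shorter route than you do: it simply combines part (1) with \cite[Theorem~2.7]{MiT2}, which controls Cohen--Macaulayness of the second \emph{symbolic} power and respects simplicial joins. Your forward implication via localization is correct in spirit, but the parenthetical ``a vertex suffices'' is not: localizing at a single vertex $v\in V$ yields $\link_\Delta\{v\} = \Gamma * \link_\Lambda\{v\}$, not $\Gamma$ itself, so you must either localize at a full facet of $\Lambda$ or run an induction on $|V|$. The backward implication, which you rightly flag as the ``main obstacle,'' is genuinely awkward by direct methods because $(I_\Gamma T + I_\Lambda T)^2$ is not a tensor product of $R/I_\Gamma^2$ and $S/I_\Lambda^2$. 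Your proposed Takayama/K\"unneth computation can be pushed through, but it would amount to reproving the Minh--Trung result for joins; the paper avoids all of this by citing \cite{MiT2} and using part (1) to pass between $I_\Delta^{(2)}$ and $I_\Delta^2$.
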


\begin{proof}
(1) Note that $I_{\Delta} = I_{\Gamma}T + I_{\Lambda}T$ and 
$G(I_{\Delta})$ is a disjoint union of $G(I_{\Gamma})$ and $G(I_{\Lambda})$. 
Thus it immediately follows from Theorem \ref{SpTriangle}.
\par
(2) It immediately follows from (1) and \cite[Theorem 2.7]{MiT2}. 
\end{proof}

\par \vspace{2mm}
A disjoint union of two graphs $G_1$ and $G_2$, denoted by 
$G_1 \coprod G_2$, is the graph $G$ which satisfies 
$V(G) = V(G_1) \cup V(G_2)$ and $E(G)=E(G_1) \cup E(G_2)$.   
Let $G = G_1 \coprod \ldots \coprod G_r$ be a disjoint union of 
graphs $G_1,\ldots,G_r$, and 
let $\Delta_i$ (resp. $\Delta$) be the complementary simplicial 
complex of $G_i$ 
for each $i=1,\ldots,r$ (resp. $G$).   
Then $\Delta$ is equal to the simplicial join 
$\Delta_1 * \cdots * \Delta_r$. 

\begin{cor} \label{Disjoint}
Let $G=G_1 \coprod \ldots \coprod G_r$ be a disjoint union of 
graphs $G_i$ for which  $I(G_i)^2$ is a Cohen-Macaulay ideal. 
Then $I(G)^2$ is a Cohen--Macaulay ideal. 
\end{cor}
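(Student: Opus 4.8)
The plan is to reduce the statement for an arbitrary disjoint union of $r$ graphs to the two-factor case already handled in Corollary~\ref{Joincor}(2), by a straightforward induction on $r$. First I would set up the inductive scheme: the base case $r=1$ is the hypothesis itself, and for $r=2$ the claim is exactly Corollary~\ref{Joincor}(2) applied to the complementary simplicial complexes $\Delta_1$ and $\Delta_2$ of $G_1$ and $G_2$, since as noted in the paragraph preceding this corollary the complementary complex of $G_1\coprod G_2$ is the simplicial join $\Delta_1*\Delta_2$.

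For the inductive step, write $G = G' \coprod G_r$ where $G' = G_1 \coprod \cdots \coprod G_{r-1}$, and let $\Delta'$, $\Delta_r$ denote the complementary simplicial complexes of $G'$ and $G_r$ respectively, so that $\Delta = \Delta' * \Delta_r$ and $I(G) = I_\Delta$. By the induction hypothesis applied to $G'$, the ideal $I(G')^2 = I_{\Delta'}^2$ is Cohen--Macaulay; and $I(G_r)^2 = I_{\Delta_r}^2$ is Cohen--Macaulay by hypothesis. Then Corollary~\ref{Joincor}(2), applied with $\Gamma = \Delta'$ and $\Lambda = \Delta_r$, gives that $I_\Delta^2 = I(G)^2$ is Cohen--Macaulay, completing the induction.

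There is essentially no obstacle here: the only point requiring a word of care is the bookkeeping identification $\Delta_1 * \cdots * \Delta_r = (\Delta_1 * \cdots * \Delta_{r-1}) * \Delta_r$, i.e.\ that the simplicial join is associative and that the complementary-complex construction turns disjoint unions of graphs into joins of complexes; both are immediate from the definitions recalled in the excerpt, since $I(G_1 \coprod G_2) = I(G_1)S + I(G_2)S$ on disjoint vertex sets and $I_{\Gamma*\Lambda} = I_\Gamma T + I_\Lambda T$. One should also observe that the hypothesis ``$I(G_i)^2$ is a Cohen--Macaulay ideal'' is understood over any field $K$ (consistent with the convention fixed in Section~1 for Cohen--Macaulayness of complexes and with the way Corollary~\ref{Joincor}(2) is stated), so that Corollary~\ref{Joincor}(2) may legitimately be invoked at each stage. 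I would write the whole argument in three or four lines.
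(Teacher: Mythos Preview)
Your proposal is correct and matches the paper's intended argument: the paper states Corollary~\ref{Disjoint} without an explicit proof, immediately after observing that the complementary complex of a disjoint union of graphs is the simplicial join of the complementary complexes, so the result is meant to follow from Corollary~\ref{Joincor}(2) by the obvious induction you describe. The only superfluous remark in your write-up is the ``over any field $K$'' caveat: Corollary~\ref{Joincor}(2) is an if-and-only-if for a fixed field $K$, so the induction already goes through field by field without that extra assumption.
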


\begin{exam} \label{DisjointPentagon}
Let $G=G_1 \coprod \ldots \coprod G_r$ be a disjoint union of 
the pentagons $G_i$ for $i=1,\ldots,r$. 
Then $I(G)^2$ is a Cohen--Macaulay ideal. 
\end{exam}

\begin{proof}[\quad Proof]
It follows that the second symbolic power of the edge ideal of the pentagon 
is a Cohen--Macaulay ideal. 
\end{proof}

\medskip 
\section{Examples of Stanley-Reisner ideals whose square is Cohen-Macaulay}

\par
By Corollary \ref{Joincor} we know that there exists a 
simplicial complex $\Delta $ with arbitrary high dimension
such that $I_{\Delta}^2$  is non-trivially Cohen-Macaulay.
We now consider the following question. 

\begin{quest}
For a given integer $d\ge 2$,
is there a simplicial complex $\Delta$ with $\dim \Delta=d-1$ 
such that $S/I_{\Delta}^2$ is Cohen-Macaulay and such that 
$\Delta$ cannot be expressed as the simplicial join of two non-empty 
complexes?
\end{quest}

\par 
We give two families of examples as affirmative answers, using liaison theory.
The following key proposition is due to 
Buchweitz \cite{Bu}; see also  Kustin and Miller \cite{KM2}. 
Note that it gives a partial converse of 
Theorem \ref{Buchsbaum}.   

\begin{prop}[\textrm{cf. \cite[6.2.11]{Bu}, \cite[Proposition 7.1]{KM2}}] 
\label{Kustin}
Let $I$ be a Gorenstein homogeneous ideal in a polynomial ring $S$. 
Assume that there exist a homogeneous polynomial ring 
$T=S[z_1,\ldots,z_r]$ $(\deg z_i =1)$ and a homogeneous radical ideal $L$ such that  
\begin{enumerate}
\item[(a)] $S/I \cong T/(z_1,\ldots,z_r, L)$. 
\item[(b)] $z_1,\ldots,z_r$ is a regular sequence on $T/L$.  
\item[(c)] $L$ is in the linkage class of a complete intersection in $T$.   
\end{enumerate}
Then $S/I^2$ is Cohen-Macaulay. 
\end{prop}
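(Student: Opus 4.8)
The plan is to reduce the statement to a known result about second powers of licci ideals, using the fact that the relevant homological invariants are preserved under the modding-out and the linkage. First I would set $\mathfrak{a} = (z_1,\ldots,z_r,L) \subseteq T$, so that by (a) we have $S/I \cong T/\mathfrak{a}$ as graded rings, and in particular $\mathfrak{a}$ is a Gorenstein ideal of $T$ (since $I$ is Gorenstein). Because $z_1,\ldots,z_r$ is a regular sequence on $T/L$ by (b), the ideal $\mathfrak{a}$ is obtained from $L$ by adjoining a regular sequence of linear forms; hence $\mathfrak{a}$ is licci if and only if $L$ is, and by (c) the ideal $L$ lies in the linkage class of a complete intersection in $T$. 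Therefore $\mathfrak{a}$ itself is a Gorenstein \emph{licci} ideal in the polynomial ring $T$.

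The heart of the argument is then the theorem of Buchweitz (cf. \cite[6.2.11]{Bu}), reproved by Kustin and Miller \cite[Proposition 7.1]{KM2}, that if $\mathfrak{a}$ is a Gorenstein ideal in the linkage class of a complete intersection, then $T/\mathfrak{a}^2$ is Cohen-Macaulay. I would invoke this as follows: a Gorenstein ideal in a regular ring that is licci is a \emph{perfect} ideal, and for such ideals the symmetric-algebra / conormal-module computations carried out in the cited references show that the second power $\mathfrak{a}^2$ is again perfect (equivalently $T/\mathfrak{a}^2$ is Cohen-Macaulay). Concretely, one uses that $\mathfrak{a}/\mathfrak{a}^2$, the conormal module, is a maximal Cohen-Macaulay $T/\mathfrak{a}$-module when $\mathfrak{a}$ is licci and Gorenstein (this is exactly the content that makes Vasconcelos-type conjectures true in the licci case), and then the short exact sequence $0 \to \mathfrak{a}/\mathfrak{a}^2 \to T/\mathfrak{a}^2 \to T/\mathfrak{a} \to 0$ forces $\operatorname{depth} T/\mathfrak{a}^2 = \dim T/\mathfrak{a}^2$.

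Finally I would descend from $T/\mathfrak{a}^2$ back to $S/I^2$. Since $S/I \cong T/\mathfrak{a} = T/(z_1,\ldots,z_r,L)$ and $z_1,\ldots,z_r$ is $T/L$-regular, one checks that $z_1,\ldots,z_r$ remains a regular sequence on $T/L^2$: indeed $L^2$ has an $L$-filtration with quotients $L^i/L^{i+1}$ that are (up to shift) direct summands of symmetric powers of the conormal module, all of which are Cohen-Macaulay over $T/L$ of the same dimension, so the $z_i$ are regular on each quotient and hence on $T/L^2$. Passing the regular sequence through, $T/(z_1,\ldots,z_r,L^2) \cong S/(\bar L)^2 = S/I^2$ after the identification in (a) (here one uses that forming the second power commutes with the faithfully flat base change of killing a regular sequence of linear forms). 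Cohen-Macaulayness is preserved by killing a regular sequence, so $S/I^2$ is Cohen-Macaulay, as claimed.

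The step I expect to be the main obstacle is the clean bookkeeping in the last paragraph: verifying that $z_1,\ldots,z_r$ stays regular on $T/L^2$ (not just on $T/L$) and that the isomorphism in (a) is compatible with squaring. The underlying reason it works is that $L$ is licci, so its associated graded ring $\operatorname{gr}_L(T)$ is Cohen-Macaulay of the expected dimension and the $z_i$ form a system of parameters that is in fact a regular sequence on it; but making this precise, rather than appealing to it as folklore, is where the real care is needed.
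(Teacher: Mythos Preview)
Your argument has a structural confusion: in the second paragraph you apply Buchweitz/Kustin--Miller to $\mathfrak{a}=(z_1,\ldots,z_r,L)$ and obtain that $T/\mathfrak{a}^2$ is Cohen--Macaulay, but this is not the ring you need. Identifying $S$ with $T/(z_1,\ldots,z_r)$, the ideal $I$ corresponds to the image of $L$, so $S/I^2 \cong T/\big((z_1,\ldots,z_r)+L^2\big)$, and $(z_1,\ldots,z_r)+L^2$ strictly contains $\mathfrak{a}^2=(z_iz_j)+(z_i)L+L^2$ (the $z_i$ themselves lie in the former but not the latter). The conclusion about $T/\mathfrak{a}^2$ is therefore never used; your third paragraph quietly starts over with $T/L^2$, asserting that the graded pieces $L^i/L^{i+1}$ are Cohen--Macaulay in order to push the regular sequence through --- but that assertion is precisely Kustin--Miller applied to $L$, which you have not invoked. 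So the actual work is hidden inside an unproved claim, while the displayed application of the theorem points at the wrong ideal.

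The paper's proof is simply the direct route your third paragraph gestures at: one observes at once that $S/I^2 \cong T/\big((z_1,\ldots,z_r)+L^2\big)$, so it suffices to show $T/L^2$ is Cohen--Macaulay. Since $L$ is radical, Gorenstein (because $T/L$ modulo the regular sequence $z_1,\ldots,z_r$ is Gorenstein), and licci, \cite[Proposition~7.1]{KM2} (applied in the completion $\widehat{T_{\fraM}}$, with Matijevic--Roberts to return to the graded ring) gives that $T/L^2$ is Cohen--Macaulay. Then $z_1,\ldots,z_r$, being part of a system of parameters on the Cohen--Macaulay ring $T/L^2$, is automatically a regular sequence on it, and the conclusion follows --- no separate filtration argument is needed. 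Note also that the hypothesis ``radical'' is placed on $L$, not on $I$ or $\mathfrak{a}$; your detour through $\mathfrak{a}$ would have to verify this side condition independently before Kustin--Miller could be applied there.
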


\begin{proof} 
Since $S/I^2$ is isomorphic to the ring $T/(z_1,\ldots,z_r,L^2)$, it is enough to show 
that  $T/L^2$ is Cohen-Macaulay.  
\par 
Let $\fraM$ be the unique homogeneous maximal ideal of $T$, and set 
$R=\widehat{T_{\fraM}}$, the $\fraM$-adic completion of $T_{\fraM}$.  
As $R/LR$ is a radical Gorenstein ideal, we can conclude that 
$LR/(LR)^2$ is Cohen-Macaulay, and thus $R/(LR)^2$ is Cohen-Macaulay 
by \cite[Proposition 7.1]{KM2}. 
It follows from Matijevic-Roberts theorem that $T/L^2$ is Cohen-Macaulay, as required. 
\end{proof}

\par
It is well-known 
that any Gorenstein ideal of codimension $3$ lies in the linkage 
class of a complete intersection; see \cite{BE, Wat} or 
\cite[Theorem 4.15]{Vas}.   
Thus we can obtain the following corollary.  

\begin{cor} \label{codim3}
Let $I_{\Delta} \subseteq S$ be a Gorenstein Stanley-Reisner ideal of 
codimension $3$. Then $S/I_{\Delta}^2$ is Cohen-Macaulay. 
\end{cor}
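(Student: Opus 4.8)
The plan is to apply Proposition \ref{Kustin} in the simplest possible way, namely with no extra variables adjoined. I would set $T=S$, take $r=0$ (so there is no regular sequence $z_1,\ldots,z_r$ to adjoin), and let $L=I_{\Delta}$ itself. Since $I_{\Delta}$ is a Gorenstein ideal by hypothesis, the ideal ``$I$'' appearing in Proposition \ref{Kustin} is again $I_{\Delta}$, and conditions (a) and (b) hold trivially: (a) is the tautology $S/I_{\Delta}=T/(L)$, and (b) is vacuous because the sequence is empty.

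Thus the only substantive point is to verify condition (c): that $L=I_{\Delta}$ lies in the linkage class of a complete intersection in $T=S$. First note that $I_{\Delta}$ is a radical ideal, being a squarefree monomial (Stanley--Reisner) ideal, so the hypothesis that $L$ be a homogeneous radical ideal is met. It then remains to invoke the structure theory of Gorenstein ideals of codimension (equivalently, grade) $3$: by the Buchsbaum--Eisenbud structure theorem such an ideal is generated by the submaximal Pfaffians of a skew-symmetric matrix, and it follows from Watanabe's theorem (see \cite{BE, Wat}, or \cite[Theorem 4.15]{Vas}) that every such ideal is \emph{licci}, that is, lies in the linkage class of a complete intersection. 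Hence condition (c) holds for $L=I_{\Delta}$.

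With (a), (b), (c) in hand, Proposition \ref{Kustin} yields directly that $S/I_{\Delta}^2$ is Cohen--Macaulay, which is exactly the assertion. No induction, no explicit resolution, and no direct computation with the power $I_{\Delta}^2$ is needed.

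There is essentially no serious obstacle here; the content of the corollary is carried entirely by the two cited inputs, Proposition \ref{Kustin} (itself resting on Buchweitz and on Kustin--Miller) and the licci property of codimension $3$ Gorenstein ideals. The one point deserving a moment's care is that Proposition \ref{Kustin} is being used in the degenerate case $r=0$; I would simply observe that its proof — which reduces the claim to the Cohen--Macaulayness of $T/L^2$ and then descends via the Matijevic--Roberts theorem — goes through verbatim when no variables are adjoined, since $S[z_1,\ldots,z_r]$ with $r=0$ is just $S$ and the empty sequence is (vacuously) regular on $T/L$.
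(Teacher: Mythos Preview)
Your proposal is correct and is exactly the approach the paper takes: it states just before the corollary that any Gorenstein ideal of codimension $3$ is licci (citing \cite{BE, Wat} and \cite[Theorem 4.15]{Vas}) and then deduces the result from Proposition \ref{Kustin}. Your only addition is the explicit remark that one applies Proposition \ref{Kustin} in the degenerate case $r=0$, which the paper leaves implicit.
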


\medskip
In the rest of this section 
we prove the second power of the Stanley-Reisner ideal 
of a stellar subdivision of any non-acyclic complete intersection complex  
is Cohen-Macaulay. 
In what follows, 
as vertices of simplicial complexes we use indeterminates 
instead of natural numbers for convenience.
Let $\Gamma$  be a non-acyclic complete intersection simplicial complex 
whose Stanley-Reisner ideal is 
\[
I_{\Gamma}=(x_{11}x_{12}\cdots x_{1i_{1}}, 
x_{21}x_{22}\cdots x_{2i_{2}}, \ldots,
x_{\mu 1}x_{\mu 2}\cdots x_{\mu i_{\mu }}).
\]
\par
Let $\mathcal{F}(\Gamma)$ be the set of all facets of $\Gamma$.
Then
\begin{eqnarray*}
\mathcal{F}(\Gamma)=
& \{ & \{x_{11}, \ldots , \widehat{x_{1k_{1}}}, \ldots, x_{1i_{1}}, 
x_{21}, \ldots, \widehat{x_{2k_{2}}}, \ldots,  x_{2i_{2}}, \ldots,\\
&&x_{\mu 1}, \ldots , \widehat{x_{\mu k_{\mu }}}, \ldots,  
x_{\mu i_{\mu }} \}\\
&&\mid
1 \le k_1 \le i_1, 1 \le k_2 \le i_2, \dots, 1 \le k_{\mu } \le i_{\mu }
\}.
\end{eqnarray*}

\par
Let $\Delta$  be the \textit{stellar subdivision} of $\Gamma$ on 
\[
F=\{ x_{11}, \ldots ,x_{1j_{1}}, x_{21}, \ldots ,x_{2j_{2}}, \ldots,
x_{p 1}, \ldots , x_{p j_{p }} \},
\]
where $1 \le p \le \mu$ and $1 \le j_1 < i_1, \dots , 1 \le j_p < i_p$ and $ j_1 +\cdots +j_p \ge 2$.
\par
Let $v$ be the new added vertex.
Then
\begin{eqnarray*}
\mathcal{F}(\Delta)= &\{ & G \in \mathcal{F}(\Gamma  ) \mid G \not\supset F \ \ \}
\cup \{ \{v \} \cup G \setminus \{w\} \mid G \supset F , w \in F \}\\
=& \{ & \{x_{11}, \ldots , \widehat{x_{1k_{1}}}, \dots, x_{1i_{1}}, 
x_{21}, \ldots, \widehat{x_{2k_{2}}}, \dots,  x_{2i_{2}}, \dots,\\
&&x_{\mu 1}, \ldots , \widehat{x_{\mu k_{\mu }}}, \dots,  
x_{\mu i_{\mu }} \}\\
&&\mid
1 \le k_1 \le i_1, 1 \le k_2 \le i_2, \dots, 1 \le k_{\mu } \le i_{\mu }\\
&& \mbox{ with } 1 \le k_1 \le j_1 \mbox{ or }  1 \le k_2 \le j_2 \mbox{ or }   
\dots \mbox{ or }  1 \le k_{p } \le j_{p}
\}\\
 \cup & \{ &
\{v, x_{11}, \dots , \widehat{x_{1k_{1}}}, \dots, x_{1i_{1}}, 
x_{21}, \dots, \widehat{x_{2k_{2}}}, \dots,  x_{2i_{2}}, \dots,\\
&&x_{\mu 1}, \dots , \widehat{x_{\mu k_{\mu }}}, \dots,  
x_{\mu i_{\mu }} \}
\setminus \{w\} \\
&&\mid
j_1+1  \le k_1 \le i_1, j_2+1  \le k_2 \le i_2, \dots, j_p+1  \le k_p \le i_p\\
&&1 \le k_{p+1 } \le i_{p+1}, \dots, 
1 \le k_{\mu } \le i_{\mu }, w \in F 
\}
\end{eqnarray*}
\par \noindent 
and 
\[
I_{\Delta}=( I_{\Gamma}, x_F, vx_{1j_1+1}\cdots 
x_{1i_{1}}, vx_{2j_2+1}\cdots x_{2i_{2}}, \dots ,
vx_{pj_p+1}\cdots x_{pi_{p}})
\]
is an ideal of a polynomial ring
\[
S=k[x_{11},\ldots, x_{1i_{1}}, 
x_{21},\ldots, x_{2i_{2}}, \ldots,
x_{\mu 1},\ldots, x_{\mu i_{\mu }},v]. 
\]

\par 
Applying Proposition \ref{Kustin} to this ideal $I=I_{\Delta}$, we obtain 
the following theorem. 
It is proved the two-dimensional case in \cite{TrTu}.

\begin{thm}  \label{Subdiv}
Let $\Delta=\Gamma_F$  be the stellar subdivision of the non-acyclic 
complete intersection complex $\Gamma$ as above. 
Then $S/I_{\Delta}^2$ is Cohen--Macaulay. 
\end{thm}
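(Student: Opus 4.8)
The plan is to apply Proposition \ref{Kustin} with $I = I_{\Delta}$. To do this I need to produce a polynomial ring $T = S[z_1,\dots,z_r]$, a homogeneous radical ideal $L \subseteq T$, and variables $z_1,\dots,z_r$ so that conditions (a), (b), (c) hold. The natural choice is dictated by the shape of $I_{\Delta}$ as written just above the statement: the generators $x_F$ and $v x_{\ell j_\ell + 1}\cdots x_{\ell i_\ell}$ ($1 \le \ell \le p$) all involve a controlled set of vertices, while the remaining generators are exactly those of $I_{\Gamma}$. First I would separate out which of the original vertices $x_{\ell k}$ (for $1 \le \ell \le p$, $1 \le k \le j_\ell$) and which of the $x_{\ell k}$ for $\ell > p$ play no essential role; these become the extra variables $z_1,\dots,z_r$. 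Concretely, I expect $L$ to be the Stanley–Reisner ideal of the stellar subdivision of a \emph{smaller} complete intersection complex — essentially the subdivision done in the variables $v, x_{1j_1+1},\dots,x_{1i_1},\dots,x_{pj_p+1},\dots,x_{pi_p}$ together with the surviving factors $x_{p+1,k},\dots$ — so that modding out $T$ by the $z_i$ and by $L$ recovers $S/I_{\Delta}$.

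Once the decomposition is set up, verifying (a) is a direct matching of generators. For (b), I would check that the variables $z_1,\dots,z_r$ form a regular sequence on $T/L$; since $T/L$ will again be (up to adding free variables) the Stanley–Reisner ring of a stellar subdivision of a complete intersection, and such rings are Cohen–Macaulay (stellar subdivisions of Cohen–Macaulay complexes of the relevant type remain Cohen–Macaulay — indeed $\Gamma$ non-acyclic complete intersection is Gorenstein, hence Cohen–Macaulay, and its stellar subdivision is homeomorphic to it, hence also Cohen–Macaulay), the $z_i$ being among the variables not appearing in $L$ gives regularity automatically. The delicate condition is (c): I must show that $L$ lies in the linkage class of a complete intersection, i.e. is licci.

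The main obstacle is exactly proving $L$ is licci. The strategy I would use is an explicit single (or short chain of) direct link(s): link $L$ against a complete intersection contained in it. The complete intersection complex $\Gamma$ has $I_{\Gamma}$ a genuine regular sequence, so its Stanley–Reisner ideal is a complete intersection, hence licci trivially; the point is that the stellar subdivision step changes $I_{\Gamma}$ into $I_{\Delta}$ in a way that is a liaison move. I expect that replacing the single generator $x_{11}\cdots x_{1i_1}$ (or rather the product $x_F \cdot(\text{tail})$) by the pair $x_F$ and $v\cdot(\text{tail})$ is precisely a "basic double link" or is realized by linking through the regular sequence consisting of $I_{\Gamma}$'s other generators together with $x_F + v\,x_{1j_1+1}\cdots x_{1i_1}$ (this sum being a nonzerodivisor modulo the others). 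More precisely, I would take the regular sequence $\underline{z}$ to consist of the generators $x_{\ell 1}\cdots x_{\ell i_\ell}$ for $\ell \ne 1$ together with one more element combining $x_F$ and the $v$-monomial, form $J = (\underline z) : L$, identify $J$ as a complete intersection (or at worst a licci ideal by induction on $\mu$ or on $j_1 + \cdots + j_p$), and conclude $L \sim J$, hence $L$ is licci. An induction on the number of subdivided vertices $j_1 + \cdots + j_p$ or on $\mu$ reduces to the base case where $L$ is directly a complete intersection.

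In summary: the order of operations is (i) write $I_{\Delta}$ in the form $(z_1,\dots,z_r,L)T$ by isolating the inessential variables; (ii) check (a) and (b), which are routine given that $S/I_{\Delta}$ is already Gorenstein (it is a stellar subdivision of a Gorenstein — indeed complete intersection — complex, and $\Gamma$ non-acyclic forces the subdivision to remain Gorenstein) and that $T/L$ is Cohen–Macaulay; (iii) prove $L$ is licci via an explicit liaison chain using the regular sequence built from the "untouched" generators of $I_{\Gamma}$ plus a single combination of $x_F$ with the new $v$-monomial, inducting if necessary on the size of the subdivided face; (iv) invoke Proposition \ref{Kustin} to conclude $S/I_{\Delta}^2$ is Cohen–Macaulay. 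The hard part is step (iii); everything else is bookkeeping with monomial ideals.
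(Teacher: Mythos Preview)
Your high-level plan --- apply Proposition~\ref{Kustin} and establish licci-ness via an explicit link built from an element combining $x_F$ with a $v$-monomial --- is exactly the paper's strategy. But the construction you propose for $L$ and the $z_i$ does not work, and the gap is precisely where the main idea lies.

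In Proposition~\ref{Kustin} the $z_i$ are \emph{new} variables adjoined to $S$; they cannot be taken from among the existing $x_{\ell k}$, and in any case every vertex of $\Gamma$ (and $v$) appears in some minimal generator of $I_\Delta$, so none is ``inessential''. More to the point, the paper's $L$ is not a Stanley--Reisner ideal at all. One introduces $N=j_1+\cdots+j_p-1$ fresh variables, sets $Z=z_1\cdots z_N$, and takes
\[
L=(I_\Gamma,\;vY_1,\dots,vY_p,\;vZ-x_F)\ \subseteq\ T=S[z_1,\dots,z_N],
\]
where $Y_\ell=x_{\ell,j_\ell+1}\cdots x_{\ell,i_\ell}$. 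The decisive generator is the \emph{binomial} $vZ-x_F$: modding out by the $z_i$ sends it to $-x_F$, which recovers $I_\Delta$ and gives (a). The new variables are not cosmetic: they make $vZ$ homogeneous of the same degree $j_1+\cdots+j_p$ as $x_F$, and they make $Z$ a regular element coprime to every generator of $I_\Gamma$, which is what lets the liaison close up.

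With this $L$, licci-ness is a clean two-step chain in $T$:
\[
(I_\Gamma,Z):(Y_1,\dots,Y_\mu,Z)=(I_\Gamma,Z,x_F),
\qquad
(I_\Gamma,\,vZ-x_F):(I_\Gamma,Z,x_F)=L,
\]
both colons taken against complete intersections of height $\mu+1$. Conditions (b) and reducedness of $T/L$ then follow from a dimension count once $L$ is known to be Cohen--Macaulay. Your proposed combining element $x_F+v\,x_{1,j_1+1}\cdots x_{1,i_1}$ cannot play this role: its two terms have degrees $j_1+\cdots+j_p$ and $1+i_1-j_1$, which differ in general, so it is not homogeneous and cannot sit in a homogeneous regular sequence. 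Without an auxiliary monomial like $Z$ to absorb the degree discrepancy, there is no evident substitute inside $S$, and the induction you sketch on $j_1+\cdots+j_p$ has neither a base case nor an inductive mechanism. The missing idea is exactly this Kustin--Miller style unprojection: deform $I_\Delta$ by replacing the monomial generator $x_F$ with $vZ-x_F$ in a larger ring.
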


\begin{proof}
Consider the variables $\underline{z} = z_1,z_2,\ldots,z_N$, where $N = j_1+\cdots +j_p-1$
and put $Z= z_1\cdots z_N$.  
Moreover, we set  
\[
\begin{array}{rclcrcl}
X_1 & = & x_{1,1}\cdots x_{1,j_1},& & Y_1 & = & x_{1,j_1+1}\cdots x_{1,i_1}, \\ 
X_2 & = & x_{2,1} \cdots x_{2,j_2},& &  Y_2 & = & x_{2,j_2+1}\cdots x_{1,i_2}, \\
&\vdots & & &  & \vdots & \\ 
X_p & = & x_{p,1}\cdots x_{p,j_p} & & Y_p & = & x_{p,j_p+1}\cdots x_{p,i_p}, \\
& & & & Y_{p+1} & = & x_{p+1,1}\cdots x_{p+1,i_{p+1}}, \\
& & & & & \vdots & \\
& & & & Y_{\mu} & = & x_{\mu,1}\cdots x_{\mu,i_{\mu}}. \\
\end{array}
\]
and 
\[
 L = (I_{\Gamma},vY_1,\ldots,vY_p,vZ-x_F)
\subseteq T=S[\underline{z}].
\] 
Then $I_{\Gamma} = (X_1Y_1,\ldots,X_pY_p,Y_{p+1},\ldots,Y_{\mu})$, 
$I_{\Delta} = (I_{\Gamma}, x_F,vY_1,\ldots,vY_p)$ and 
$S/I_{\Delta}$ is isomorphic to $T/(\underline{z},L)$. 
\par
In what follows, we show that $L$ lies in the linkage class of a complete 
intersection (i.e., licci). 
Firstly, we can easily prove the following equality:
\begin{equation} \label{First-link}
(I_{\Gamma},Z) \colon (Y_1,\ldots,Y_{\mu},Z) = (I_{\Gamma},Z,x_F). 
\end{equation}
Secondly we show the following equality:
\begin{equation} \label{Second-link}
L=(I_{\Gamma},vZ-x_F) \colon (I_{\Gamma},Z,x_F). 
\end{equation}
To end this, it is enough to show the right-hand side is contained in $L$. 
Let $\alpha \in (I_{\Gamma},vZ-x_F) \colon (I_{\Gamma},Z,x_F)$. 
Then there exists a $\beta \in T$ such that $\alpha Z - \beta (vZ-x_F) \in I_{\Gamma}$. 
Then $\beta \in (I_{\Gamma},Z) \colon x_F = (Y_1,\ldots,Y_{\mu},Z)$. 
In particular, we can write $\beta = \sum_{i=1}^{\mu} \gamma_i Y_i + \delta Z$ for 
some $\gamma_i$, $\delta \in T$. 
It follows that 
\[
Z\bigg[\alpha - \sum_{i=1}^p \gamma_i (vY_i) - \delta (vZ-x_F)\bigg] 
\in I_{\Gamma}. 
\]  
As $Z$ is a nonzero divisor on $T/I_{\Gamma}T$, we conclude that $\alpha \in L$.  
\par 
In Equations (\ref{First-link}), (\ref{Second-link}), 
both $(I_{\Gamma},Z)$ and $(I_{\Gamma},vZ-x_F)$ 
are complete intersection ideals of the same height $\mu+1$ 
as $(Y_1,\ldots,Y_{\mu},Z)$ or $L$. 
Hence $L$ is licci. 
\par
In order to prove that $S/I_{\Delta}^2$ is Cohen-Macaulay by 
Proposition \ref{Kustin}, 
it is enough to show that $\underline{z}$ is a regular sequence on $T/L$ 
and that $T/L$ is reduced. 
By the above proof, we have that $L$ is licci and $\dim T/L = \dim T/(Y_1,\ldots,Y_{\mu},Z)$. 
In particular, $L$ is Cohen-Macaulay and $\dim T/L = i_1+\cdots + i_{\mu} - \mu +N$. 
\par
On the other hand, 
\[
 \dim T/(\underline{z},L) = \dim S/I_{\Delta}
 = \dim S/(I_{\Gamma},v) = i_1 + \cdots + i_{\mu} - \mu 
= \dim T/L - N.
\]
This implies that $\underline{z}$ is a regular sequence on $T/L$. 
Moreover, as $T/(\underline{z},L)$ is reduced, so is $T/L$, as required. 
\end{proof}

\begin{remark}
The above Gorenstein ideals are obtained from the 
so-called Herzog ideals (see \cite{He, Hu, KM1, KM2}) 
and $T/L$ is called the \textit{Kustin-Miller unprojection ring} (\cite{BP}). 
Moreover, the assertion of Theorem \ref{Subdiv} 
says that the quotient algebras 
of those ideals are \textit{strongly unobstructed}.   
\end{remark}

\begin{exam}[Cross Polytope] \label{cross}
Let ${\bf e}_1,\ldots,{\bf e}_d$ be the fundamental vectors of 
the $d$-dimensional Euclidean space $\mathbb{R}^d$. 
Then the convex hull $\mathcal{P} = 
\mathrm{CONV}(\{\pm{\bf e}_1,\pm{\bf e}_2,\ldots,\pm{\bf e}_d \})$ 
is called the \textit{cross $d$-polytope}.
Let $\Gamma$ be the boundary complex of the cross $d$-polytope $\mathcal{P}$. 
Let $W=\{x_1,\ldots,x_d,y_1,\ldots,y_d\}$. 
For a sequence ${\bf i} = [i_1,\ldots,i_m]$ with $1 \le i_1 < \cdots < i_m \le d$, we assign  
a subset of $W$
\[
 F_{\bf i} = \big\{x_{i_1},\ldots,x_{i_m} \big\} \cup 
\big\{y_j \,:\, j \in [d] \setminus \{i_1,\ldots,i_m\}\big\}.   
\]
Then $\Gamma$ can be regarded as a simplicial complex on $W$ such that 
\[
\mathcal{F}(\Gamma) 
= \{F_{\bf i}: m=0,1,\ldots,d,\, 1 \le i_1 < \cdots < i_m \le d \},  
\] 
and it is a $(d-1)$-dimensional complete intersection complex
with
\[
 I_{\Gamma} = (x_1y_1,x_2y_2,\ldots,x_dy_d). 
\] 
\par 
Let $v$ be a new vertex, and choose a facet 
$F_{[1,2.\ldots,d]}=\{x_1,\ldots,x_d\}$ of $\Gamma$. 
Let $\Delta$ be the stellar subdivision of $\Gamma$ on $F$. 
Then  $\Delta$ is a $(d-1)$-dimensional Gorenstein complex on $V=W \cup\{v\}$
and its geometric realization of $\Delta$ is homeomorphic to 
$\mathbb{S}^{d-1}$. 
The above theorem says that 
the second power of 
\[
 I=(x_1y_1,x_2y_2,\ldots,x_dy_d,\,vy_1,\ldots,vy_d,\,x_1x_2\cdots x_d)
\]
is Cohen-Macaulay, but the third power is not if $d \ge 2$ 
because the third power of the Stanley-Reisner ideal $(x_1y_1,x_2y_2,vy_1,vy_2,x_1x_2)$
of a pentagon is not.  
\end{exam}

\par \vspace{2mm}
In the last of the paper, we give candidates of edge ideals $I(G)$ 
for which $S/I(G)^2$ is Cohen--Macaulay (but $S/I(G)^3$ is not by \cite{RTY}). 
For the case that $n=2$ it is mentioned in \cite[Theorem 3.7 (iv)]{TrTu}.

\begin{conj} \label{secondpowerCM}
Let $G$ be a graph on the vertex set $V=\{x_1,x_2,\ldots,x_{3n+2}\}$ with 
\[
I(G)  = 
\big(x_1x_2, \; \{x_{3k-1}x_{3k},x_{3k}x_{3k+1},x_{3k+1}x_{3k+2},
x_{3k+2}x_{3k-2}\}_{k=1,2,\ldots,n},\;
\{x_{3\ell-3}x_{3\ell}\}_{\ell=2,3,\ldots,n} \big).
\]
Then $S/I(G)^2$ is Cohen--Macaulay but $S/I(G)^3$ is not. 

\vspace{3mm}
\begin{center}
\end{center}
\end{conj}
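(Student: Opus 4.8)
The plan is to verify the hypotheses of Proposition~\ref{Kustin} for $I=I(G)$, following the strategy of the proof of Theorem~\ref{Subdiv} but iterating it $n$ times; the Cohen--Macaulayness of $S/I(G)^{2}$ then follows. Throughout, write $\Delta=\Delta(G)$ for the complementary simplicial complex of $G$, so that $I(G)=I_{\Delta}$.

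First I would produce a combinatorial model for $\Delta$. The claim to check is that $\Delta(G)$ is obtained from the boundary complex $\Gamma_{0}$ of the cross $(n+1)$-polytope, whose Stanley--Reisner ideal is the complete intersection $(u_{1}w_{1},\dots,u_{n+1}w_{n+1})$ as in Example~\ref{cross}, by a tower of $n$ stellar subdivisions on edges, $\Gamma_{0}\to\Gamma_{1}\to\cdots\to\Gamma_{n}=\Delta(G)$ with $\Gamma_{k}=(\Gamma_{k-1})_{F_{k}}$ for a suitable $1$-face $F_{k}=\{a_{k},b_{k}\}$ and new vertex $v_{k}$. On the level of edge ideals such a step sends $I_{\Gamma_{k-1}}$ to $(I_{\Gamma_{k-1}},\,x_{a_{k}}x_{b_{k}},\,v_{k}x_{c}\colon \{a_{k},c\}\text{ or }\{b_{k},c\}\in E(G_{k-1}))$, where $G_{k-1}$ is the graph of $\Gamma_{k-1}$; the base relation $x_{1}x_{2}$ comes from the first subdivision (this is Example~\ref{cross} with $d=2$), and $F_{k}$ is to be chosen so that the $k$-th subdivision produces the four relations indexed by $k$ together with the chord $x_{3\ell-3}x_{3\ell}$. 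The cases $n=1$ and $n=2$ are Example~\ref{cross} ($d=2$) and \cite[Theorem~3.7]{TrTu}, respectively. Because a stellar subdivision preserves the homeomorphism type, every $\Gamma_{k}$ is a triangulated $n$-sphere; in particular $\Delta(G)$ is a Gorenstein complex, so $I(G)$ is a Gorenstein ideal, giving the first hypothesis of Proposition~\ref{Kustin}.

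Next I would run the liaison construction of the proof of Theorem~\ref{Subdiv} once per subdivision. Set $T=S[z_{1},\dots,z_{n}]$, with one new variable $z_{k}$ per step (here the quantity $N$ of that proof equals $\#F_{k}-1=1$, since each $F_{k}$ is an edge), and build inductively a homogeneous radical ideal $L=L_{n}\subseteq T$, starting from the complete intersection $L_{0}=I_{\Gamma_{0}}$, so that $S/I(G)\cong T/(z_{1},\dots,z_{n},L)$. The licci-ness of $L$ should be established by a chain of $2n$ direct links, two per step, of exactly the two shapes in Equations~(\ref{First-link}) and~(\ref{Second-link}): at the $k$-th step one links an ideal of the form $(L_{k-1}',z_{k})$ to $(L_{k-1}',z_{k},x_{F_{k}})$ and then to $(L_{k-1}',v_{k}z_{k}-x_{F_{k}})$, where $L_{k-1}'$ records the data after $k-1$ steps. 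As linkage is an equivalence relation and $L_{0}$ is a complete intersection, $L$ is licci, hence Cohen--Macaulay. Finally, the dimension count
\[
\dim T/(z_{1},\dots,z_{n},L)=\dim S/I(G)=\dim S/(I_{\Gamma_{0}},v_{1},\dots,v_{n})=\dim T/L-n
\]
forces $z_{1},\dots,z_{n}$ to be a regular sequence on $T/L$, and reducedness of $T/L$ follows from that of $T/(z_{1},\dots,z_{n},L)=S/I(G)$; Proposition~\ref{Kustin} then yields that $S/I(G)^{2}$ is Cohen--Macaulay. (For $n=1$ this is already Corollary~\ref{codim3}, as $I(C_{5})$ is Gorenstein of codimension $3$.) For the remaining assertion, $G$ contains the $5$-cycle on $\{x_{1},\dots,x_{5}\}$, so it is not a disjoint union of edges; hence $S/I(G)^{3}$ is not Cohen--Macaulay by \cite{RTY}.

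I expect the main obstacle to be twofold. First, pinning down the tower $\Gamma_{0}\to\cdots\to\Gamma_{n}$, that is, choosing the faces $F_{k}$ so that the generated ideal is \emph{literally} the $I(G)$ displayed in the conjecture; the chords $x_{3\ell-3}x_{3\ell}$ make this identification delicate. Second, and more seriously, justifying the links $(L_{k-1}',z_{k})\sim(L_{k-1}',z_{k},x_{F_{k}})$ and the analogue of Equation~(\ref{Second-link}) once $L_{k-1}'$ is no longer a complete intersection: this amounts to showing that the argument in the proof of Theorem~\ref{Subdiv} survives when the complete intersection $I_{\Gamma}$ there is replaced by an arbitrary licci ideal of the appropriate shape, which in turn rests on the fact that the relevant colon ideals are controlled by the radical, Cohen--Macaulay structure of $L_{k-1}'$ alone. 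Once these are established, the remaining verifications---the regular sequence property, reducedness, and the appeal to Proposition~\ref{Kustin}---are routine.
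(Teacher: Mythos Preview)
The statement you are addressing is Conjecture~\ref{secondpowerCM}, and the paper gives \emph{no} proof of it: it is explicitly presented as a conjecture, with only the remark that the case $n=2$ is covered by \cite[Theorem~3.7(iv)]{TrTu} and that the failure of Cohen--Macaulayness for $S/I(G)^{3}$ follows from \cite{RTY}. So there is no ``paper's own proof'' to compare your proposal against; you are attempting to settle an open question.

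As for the content of your proposal, the combinatorial picture is plausible---the vertex count $2(n{+}1)+n=3n+2$ and the dimension match, and for $n=1$ the pentagon is exactly a single edge-subdivision of the square. But the gap you yourself flag is real and is the entire difficulty. The proof of Theorem~\ref{Subdiv} uses the complete intersection property of $I_{\Gamma}$ in an essential way: Equation~(\ref{First-link}) is the colon computation $(I_{\Gamma},Z)\colon(Y_{1},\dots,Y_{\mu},Z)=(I_{\Gamma},Z,x_{F})$, which works because the $Y_{i}$'s are part of a regular sequence generating $I_{\Gamma}$, and Equation~(\ref{Second-link}) then needs the specific shape of that colon. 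Once $L_{k-1}'$ is only licci rather than a complete intersection, neither computation is available as stated; saying that ``the relevant colon ideals are controlled by the radical, Cohen--Macaulay structure of $L_{k-1}'$ alone'' is an assertion, not an argument, and in fact linkage of Gorenstein ideals of codimension $\ge 4$ is notoriously subtle (cf.\ the remarks after Corollary~\ref{codim3}). Your plan would amount to proving that iterated stellar subdivisions of complete intersection spheres always yield licci Stanley--Reisner ideals admitting the deformation required by Proposition~\ref{Kustin}, which is substantially stronger than anything in the paper and is precisely why the authors left this as a conjecture.
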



\par \vspace{2mm}
\begin{acknowledgement}
We would like to thank the referee for his/her advice. 
Especially, Section 5 is arranged 
based on the referee's advice.  
Moreover, 
the second author was supported by JSPS 20540047. 
The third author was supported by JSPS 19340005. 
\end{acknowledgement}


\end{document}